\newcommand{\R}{\mathbb{R}}
\newtheorem{theo}{Theorem}
\newtheorem{proposition}{Proposition}
\newtheorem{lemme}{Lemma}
\newtheorem{rem}{Remark}
\title{Joint law of the hitting time, overshoot and undershoot for a L\'evy process}
\author{Laure Coutin\footnote{IMT, University of Toulouse, France, laure.coutin@math.univ-toulouse.fr}, Waly Ngom\footnote{FST, University Cheikh Anta Diop of Dakar, S\'en\'egal, ngomwaly@gmail.com- IMT, University of Toulouse, France, waly.ngom@math.univ-toulouse.fr}}
\begin{document}
\maketitle
\begin{abstract}
\noindent
Let $(X_{t},t\geq 0)$ be a L\'evy process which is the sum of a Brownian motion with drift and a compound Poisson process. We consider the first passage time $\tau_{x} $ at a fixed
level $ x> 0 $ by $(X_{t},t\geq 0)$ , and $K_{x}:=X_{\tau_{x}}-x$ the overshoot and $ L_{x}:=x-X_{\tau_{x^{-}}}$ the undershoot. We first study the continuity of the density of
$\tau_{x} .$ Secondly, we calculate the joint law of $(\tau_{x},K_{x}, L_{x}) . $  
\end{abstract}   
~~~~~~\textbf{ keywords}: L\'evy process, jump process, hitting time, overshoot, undershoot.

\section{Introduction}
In the theory of risk in continuous time the surplus of an insurance company is modelled by a stochastic process $(X_t, t\geq 0)$. The positive real number $x$ denotes the initial surplus and $\tau_x := \inf\{t\geq 0: X_t \geq x\}$ may be interpreted as the  default time.  
This paper deals with $\tau_x$ when $X$  is  a L\'evy process, sum of  a drifted Brownian motion  and  a compound Poisson process. Our main results lead to the regularity of the density of the hitting time and to an explicit expression characterizing the joint distribution of the triplet (first hitting time, overshoot, undershoot).\\
J. Bertoin \cite{bertoin1998levy} gives a quick and concise treatment of the core theory on L\'evy processes with the minimum of technical requirements. He gives some details on subordinators, fluctuation theory, L\'evy processes with no positive jumps and stable processes.\\  
P. Tankov and R. Cont \cite{cont2004chapman} provide a self-contained overview of theoretical, numerical and empirical research on the use of L\'evy processes in financial modeling. \\
When the process $X$ has jumps, the first results are obtained by Zolotarev \cite{zolotarev1964first} and Borovkov \cite{borovkov1965first} for $X$ a spectrally negative L\'evy process. Moreover, if $X_t$ the probability density with respect to the Lebesgue measure $p(x,t)$ then the law of $\tau_x$ has the density with respect to the Lebesgue measure $f(t,x)$ such that $xf(t,x)=tp(x,t).$
R. A. Doney \cite{doney1991hitting} deals with hitting probabilities, hitting time distributions and associated quantities for L\'evy processes which have only positive jumps. He gives an explicit formula for the joint Laplace transform of the hitting time $\tau_{x} $ and the overshoot $X_{\tau_{x}}-x.$\\
When $ X $ is a stable L\'evy process, Peskir \cite{peskir2008law} obtains an explicit formula for the passage time density. Moreover, if $X$ has no negative jumps and if $S_t= \sup_{0\leq s \leq t}X_s$ is its running supremum, Bernyk et al. \cite{bernyk2008law} show that the density function $f_t$ of $S_t$ can be characterized as the unique solution to a weakly singular Voltera integral equation of the first kind.\\
In the case where $ X $ is a jump-diffusion process, with jump   size  following a double exponential law, Kou and Wang \cite{kou2003first} give the law of $\tau_{x}.$ They obtain explicit solutions of the Laplace transform of the distribution of the first passage time. Laplace transform of the joint distribution of jump-diffusion and its running maximum, $S_t=\sup_{s\leq t}X_s$, is also obtained. Finally, they give numerical examples.
\\
For a general L\'evy process, Doney and Kyprianou \cite{doney2006overshoots} and Kyprianou \cite{kyprianou2014fluctuations} give the law of the quintuplet
 $(\bar{G}_{\tau_{x}}, \tau_{x}-\bar{G}_{\tau_{x^{-}}},X_{\tau_{x}}-x,x-X_{\tau_{x^{-}}},x-\bar{X}_{\tau_{x^{-}}})$ where 
$ \bar{X}_{t}=\sup_{s\leq t}X_{s}$ and  $\bar{G}_{t}=\sup\{s<t,X_{s}=X_{t}\}.$\\
For a stable L\'evy process $X$ of index $\alpha \in (1,2)$  the L\'evy measure of which
has the density $s(x)=cx^{-\alpha-1},~ x>~0,$  R. A. Doney in \cite{doney2008note} considers the supremum 
$S_t=\sup_{s\leq t} X_s$ of $X$.
 He shows that $S_1$ behaves as  $ s(x)\sim cx^{-\alpha-1}$ as $x \rightarrow +\infty$.
\\ 
Recently, Pog{\'a}ny, Tibor K and Nadarajah in \cite{poganymadaraj} give a shorter  
 and more general proof of R. A. Doney's  previous result \cite{doney2008note}. They derive 
 the first known closed form expression for $s(x)$ and the corresponding cumulative function, then they obtain the
  order of the remainder in the asymptotic expansion of $s(x)$.
With the same model, Alexey et al. \cite{kuznetsov2014hitting}  find the Mellin transform of the first
 hitting time of the origin and give an expression for its density.
 \\
\noindent 
Here, we show that the cumulative function of the first hitting time for one L\'evy process belongs to $\mathcal{C}(\R^*_+\times\R^*_+)$  and for any $x\in\R^*_+,$ to  $\mathcal{C}(\R_+)$ and then 
 we derive the first known closed form expression which characterizes the law of
  $(\tau_{x},K_{x}, L_{x})$. 
\\   
The paper is organized as  follows:
Section \ref{model} presents the model and the aim of the paper,
 Section \ref{regularity} studies the regularity of the law of $\tau_{x},$
  Section \ref{jointlaw}  provides  the joint law  of $(\tau_{x},K_{x}, L_{x})$. 

\section{Model and Problem to solve}
 \label{model}
 On a probability space $(\Omega, \mathcal{F}, \mathbb{P} ),$
let $ X $ be a L\'evy process, right continuous with left limit (RCLL)  starting at 0. It is defined as
\begin{align}
X_t=mt+W_t+\sum_{i=1}^{N_t} Y_{i}
\end{align} 
 where $ m \in \mathbb{R},~ W$
 is a standard Brownian motion, $N$ a Poisson process with constant positive intensity 
 $\lambda$ and $(Y_{i}, i \in \mathbb{N}^{*}) $ is a sequence of independent identically distributed 
random variables with a distribution function $F_{Y} $.  We suppose that the following $\sigma$-fields 
$\sigma(N_{t}, t \geq~0), \sigma(Y_{i}, i \in~\mathbb{N}^{*} ) $ and  $ \sigma(W_{t}, t \geq~0)$ are independent. 
 We are interested  in  the first hitting time at a level $ x>0$,
\begin{equation}
 \tau_{x}:=\inf\{t\geq 0, X_{t} \geq x\}.
\end{equation}
We also consider the overshoot and the undershoot respectively defined by
\begin{equation}
 K_{x}:=X_{\tau_{x}}-x,
\end{equation}
\begin{equation}
 L_{x}:=x-X_{\tau_{x^{-}}}.
\end{equation} 
For $\tilde{X}_{t}:=mt+W_{t}$ and $\tilde{\tau}_{x}:= \inf\{t\geq 0;\tilde{X}_{t}\geq x\},$ I. Karatzas and S. E. Shreve in  \cite{karatzas2012brownian} shown that the law of $\tilde{\tau}_{x}$   is of the form  
$\tilde{f}(u,x)du+\mathbb{P}(\tilde{\tau}_{x}^{~}= \infty)\delta_{\infty}(du)$  where
\begin{align}
\label{densitycontinue}
\tilde{f}(u,x)=\frac{\mid x \mid}{\sqrt{2\pi u^{3}}}\exp[-\frac{1}{2u}(x-mu)^{2}]1_{]0,+\infty[}(u) \mbox{ and }\mathbb{P}(\tilde{\tau}_{x}=\infty)=1-e^{mx-\mid mx \mid}.
\end{align}
L. Coutin and D. Dorobantu \cite{coutin2011first} prove the existence of the density $f_{\tau_{x}}(t,x) $ of $\tau_{x} $ and show that:

\begin{equation} 
\label{fonctiontaux}
 f_{\tau_{x}}(t,x)=\left\{ \begin{array}{ll}
       \lambda\mathbb{E}(1_{\tau_{x}>t}(1-F_{Y})(x-X_{t}))+ \mathbb{E}(1_{\tau_{x} > T_{N_{t}}}\tilde{f}(t-T_{N_{t}},x-X_{T_{N_{t}}})),&\quad \forall t > 0
                   \\                
                  \frac{\lambda}{2}(2-F_{Y}(x)-F_{Y}(x_{-}))+\frac{\lambda}{4}(F_{Y}(x)-F_{Y}(x_{-})) & \mbox{ if } t=0\\ 
                 \end{array} \right.
\end{equation}
and $ \mathbb{P}(\tau_x=\infty)=0$ if and only if $m+ \lambda \mathbb{E}(Y_1)\geq 0.$\\ 
For a more general jump-diffusion process, Roynette et al. \cite{roynette2008asymptotic} show that the Laplace
transform of $(\tau_{x},K_{x}, L_{x})$ is solution of some kind of random integral equation.\\
The problem  addressed  in this paper is  studying the regularity of the density of $\tau_x$ on  $]0,+\infty[\times]0,+\infty[,$ {then at $t=0$ for a strictly positive level $x$ fixed} and  compute an expression for the  joint distribution  of the triplet $(\tau_{x},K_{x}, L_{x}).$
\section{Regularity of  $f_{\tau_{x}}$} \label{regularity}
{This section deals with the regularity. The first subsection \ref{regularitya2} treats the continuity on $]0,\infty[\times ]0,\infty[$ as well as the last one \ref{regularityat0} studies the regularity with respect to time at $0.$
\subsection{Regularity of the density $f_{\tau_x}$ on $]0,\infty[\times ]0,\infty[$}
\label{regularitya2}
Here, our goal is to prove Proposition \ref{propocontinuity} which asserts the regularity of $\tau_x$  density law on $]0,+\infty[\times]0,+\infty[$ .}
\begin{proposition} 
\label{propocontinuity}
The application defined  on  $]0,+\infty[\times]0,+\infty[$ by 
\begin{align*}
(t,x) \longrightarrow f_{\tau_{x}}(t,x)= \lambda \mathbb{E}\left({\bf 1}_{\{\tau_x>t\}}(1-F_Y)(x-X_t)\right) + 
\mathbb{E}\left(1_{\tau_{x}>T_{N_{t}}}\tilde{f}(t-T_{N_{t}}, x-X_{T_{N_{t}}})\right)
\end{align*}
is almost surely continuous.
\end{proposition}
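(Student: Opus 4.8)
The plan is to establish continuity of the two terms in the expression for $f_{\tau_x}(t,x)$ separately, treating $(t,x)\mapsto \lambda\mathbb{E}\left({\bf 1}_{\{\tau_x>t\}}(1-F_Y)(x-X_t)\right)$ and $(t,x)\mapsto \mathbb{E}\left(1_{\tau_x>T_{N_t}}\tilde f(t-T_{N_t},x-X_{T_{N_t}})\right)$ one at a time. For the first term, I would fix a point $(t_0,x_0)$ and a sequence $(t_n,x_n)\to(t_0,x_0)$, then argue pathwise: since $\tilde X$ has continuous paths and the jump times $T_i$ are a.s. distinct from any fixed deterministic time, we have ${\bf 1}_{\{\tau_x>t\}}$ converging a.s. as $(t,x)$ varies (the level-crossing time $\tau_x$ has no atom at $t_0$ on the relevant event, using that $X$ started strictly below $x_0$ and the absence of a fixed-time jump). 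Together with right-continuity of $X$ and the fact that $1-F_Y$ is bounded (by $1$), dominated convergence gives continuity of the first term; the possible discontinuities of $F_Y$ are harmless because they occur on a Lebesgue-null set of the continuous random variable $x-X_t$ when $X$ has a Brownian component, or can be absorbed into the a.s.\ statement.

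For the second term, the natural approach is to condition on $N_t$: write the expectation as a sum over $\{N_t=n\}$, on which $T_{N_t}=T_n$ and $X_{T_{N_t}}=X_{T_n}$. On each such event, $t-T_n>0$ strictly (a.s., since $T_n\neq t$ a.s.) and $x-X_{T_n}$ can be anything, but crucially $\tilde f(u,y)$ as given in \eqref{densitycontinue} is a jointly continuous function of $(u,y)$ on $]0,+\infty[\times\mathbb{R}$ — it is even smooth there. So on the event $\{N_{t_0}=n\}\cap\{\tau_{x_0}>T_n\}$, continuity of $\tilde X$ and of $\tilde f$ on the open set gives pathwise convergence $\tilde f(t_n-T_n,x_n-X_{T_n})\to \tilde f(t_0-T_n,x_0-X_{T_n})$. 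The indicator $1_{\tau_x>T_{N_t}}$ again converges a.s., and one needs a domination argument: $\tilde f(u,y)\le \frac{|y|}{\sqrt{2\pi u^3}}$, which near a fixed $(t_0,x_0)$ with $t_0-T_n$ bounded away from $0$ is uniformly bounded, and the sum over $n$ is controlled by $\mathbb{E}(N_t)<\infty$ and the Gaussian tail in $y$.

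The main obstacle I anticipate is the behavior near the diagonal-type singularity of $\tilde f$, namely handling the event where $t_0-T_{N_{t_0}}$ is small — i.e.\ when the last Poisson jump before time $t_0$ occurs very close to $t_0$. On this event $\tilde f(t_0-T_{N_{t_0}},\cdot)$ blows up like $(t_0-T_{N_{t_0}})^{-3/2}$, so a naive uniform bound fails; the continuity must be salvaged by using the Gaussian exponential factor $\exp[-\frac{1}{2u}(x-mu)^2]$, which forces $x-X_{T_{N_t}}$ to be correspondingly small, or else the integrand is negligible. I would make this precise by splitting the event $\{t_0-T_{N_{t_0}}<\varepsilon\}$ off and bounding its contribution uniformly in $(t,x)$ near $(t_0,x_0)$ using the inequality $\sup_{u>0}\frac{|y|}{\sqrt{2\pi u^3}}e^{-y^2/(4u)}\le C/y^2$ type estimates together with $\mathbb{P}(t_0-T_{N_{t_0}}<\varepsilon)\to 0$, then use uniform continuity of $\tilde f$ on the compact region $\{u\ge\varepsilon\}$ for the complementary event.

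Finally, I would assemble the two pieces: each term is a.s.\ continuous on $]0,+\infty[\times]0,+\infty[$, hence so is their sum, which is $f_{\tau_x}(t,x)$. The phrase \emph{almost surely continuous} in the statement presumably refers to the fact that the expectations are over the randomness and one shows continuity of the deterministic function $(t,x)\mapsto f_{\tau_x}(t,x)$ via a.s.\ convergence of the integrands plus dominated convergence — so the "a.s." qualifier is really bookkeeping for the dominated-convergence step rather than asserting continuity of a random field.
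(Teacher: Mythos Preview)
Your overall architecture matches the paper's: treat the two expectations separately, establish almost-sure convergence of the integrands at a fixed $(t_0,x_0)$, and pass to the limit via a domination/uniform-integrability argument. For the first term the paper does exactly what you describe (the integrand is bounded by $1$, and the null-set of discontinuities of $F_Y$ is handled because $x_0-X_{t_0}$ has a density). For the indicator ${\bf 1}_{\{\tau_x>T_{N_t}\}}$ the paper is more careful than your sketch: it runs a case analysis on whether the running supremum $X^*_{T_{N_{t_0}}}$ is attained at $T_{N_{t_0}}$ or strictly before, and treats the boundary case $X^*_{T_{N_{t_0}}}=x_0$ separately by showing that the \emph{product} with $\tilde f$ still converges (to $0$). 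Your one-line ``the indicator converges a.s.'' skips this edge case.

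The real gap is in your treatment of the singularity of $\tilde f$ when $t-T_{N_t}$ is small. Your plan is to split off $\{t-T_{N_t}<\varepsilon\}$ and control its contribution by a pointwise bound of the type $\sup_{u>0}\tilde f(u,y)\le C/y^2$ combined with $\mathbb{P}(t-T_{N_t}<\varepsilon)\to 0$. This does not close: with $y=x-X_{T_{N_t}}$, the variable $1/y^2$ is not integrable (since $X_{T_{N_t}}$ has a density near $x$), so a small-probability event cannot absorb it. The alternative pointwise bound $\tilde f(u,y)\le C/u$ fails for the same reason, because $\mathbb{E}\big[(t-T_{N_t})^{-1}\big]=\infty$ (the Beta integral diverges at the endpoint). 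In other words, no bound that is pointwise in $\omega$ suffices here. The paper resolves this through Lemma~\ref{ui}: one first integrates over the \emph{Brownian component} of $X_{T_{N_t}}$ (conditionally on the Poisson skeleton), using the Gaussian identity of Lemma~A.1 in \cite{coutin2011first}. This conditional integration converts $\tilde f(t-T_{N_t},x-X_{T_{N_t}})^p$ into an expression whose singularity in $t-T_{N_t}$ has exponent $(1-2p)/2$, which is $>-1$ for $p$ slightly above $1$, hence integrable against the law of $T_{N_t}$. That yields $\sup_{t\in[t_1,t_2]}\mathbb{E}\big[\tilde f(t-T_{N_t},x-X_{T_{N_t}})^p\big]<\infty$ and therefore uniform integrability. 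You should replace your $\varepsilon$-splitting by this $L^p$ bound; the key idea you are missing is to exploit the Gaussian law of $W_{T_{N_t}}$ (independent of $N$), not just the Gaussian factor inside $\tilde f$.
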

\begin{proof}
 Let be $(t_0,x_0) \in \mathbb{R}_+^*\times \mathbb{R}_+^*.$ We denote 
\begin{align*} 
\Omega'= \left\{\omega \in \Omega \mbox{ such that } T_{N_{t_0}(\omega)}(\omega)\neq t_0,
~~\tau_{x_0}(\omega) \neq t_0,~~x_0-X_{t_0}(\omega) \notin D_{F_Y}\right\}
\end{align*} 
where $D_{F_Y}$ is the set of the points of discontinuity of the distribution function $F_Y.$ 
\\
We assert that ${\mathbb P}(\Omega')=1$: Indeed, we have
\begin{align*}
1-{\mathbb P}(\Omega') \leq \mathbb{P}(\tau_x =0)+\mathbb{P}(X_{t_0}=x_0)+\mathbb{P}(T_{N_{t_0}}=t_0)+
  \mathbb{P}(x_0-X_{t_0} \in D_{F_Y}).
\end{align*} 
Since $\tau_{x_0},~~T_{N_{t_0}}$ and $X_{t_0}$ have a densities with respect to the Lebesgue measure and $D_{F_Y}$ is almost countable, it follows
 $$\mathbb{P}(\tau_x =0)=\mathbb{P}(X_{t_0}=x_0)=\mathbb{P}(T_{N_{t_0}}=t_0)= \mathbb{P}(x_0-X_{t_0} \in D_{F_Y})=0.$$ 

Note that ${\bf 1}_{\{\tau_x> T_{N_t}\}}={\bf 1}_{\{X_{T_{N_t}}^*<x\}}$ where $X_t^*= \sup_{u\leq t}X_u.$

The random variable $X_{T_{N_{t_0}}}^*$ is reached by the process $X$ either before $T_{N_{t_0}}$ either at $T_{N_{t_0}}.$\\
Let $\omega $ be fixed in $\Omega'.$
\\  
$\bullet$
 On the  event  ${\{\tau_{x_0}> T_{N_{t_0}}\}}={\{X_{T_{N_{t_0}}}^*<x_0\}},$
$X_{T_{N_{t_0}(\omega)}(\omega)}^*(\omega)= X_{T_{N_{t_0}(\omega)}(\omega)}(\omega)\neq~x_0.$
\\
Thus, if  $X_{T_{N_{t_0}(\omega)}(\omega)}^*(\omega)$ is reached at $T_{N_{t_0}(\omega)}(\omega),$ either it is less than $x_0$ or more than $x_0.$\\
If $X_{T_{N_{t_0}(\omega)}(\omega)}^*(\omega)= X_{T_{N_{t_0}(\omega)}(\omega)}(\omega)< x_0,$ since $T_{N_{t_0}(\omega)}(\omega) <~t_0 <~ T_{N_{t_0}(\omega)+1}(\omega),$  there exists  $\varepsilon_0(\omega)>0$ and $\delta_0(\omega)>0$ such that for any $t$ satisfying $|t-~t_0|~\leq~\delta_0(\omega)$  we have $x_0-\varepsilon_0(\omega)<X_{T_{N_{t}(\omega)}(\omega)}^*(\omega)< x_0+\varepsilon_0(\omega).$\\
That means for $(t,x)$ such that $|t-t_0|< \delta_0(\omega)$ and $|x-x_0|<\varepsilon_0(\omega),$ we have 
$${\bf 1}_{\{\tau_x(\omega)> T_{N_t(\omega)}(\omega)\}}={\bf 1}_{\{\tau_{x_0}(\omega)> T_{N_{t_0}(\omega)}(\omega)\}}=1 .$$
If $X_{T_{N_{t_0}(\omega)}(\omega)}^*(\omega)= X_{T_{N_{t_0}(\omega)}(\omega)}(\omega)> x_0,$  since $T_{N_{t_0}(\omega)}(\omega) <~t_0 <~ T_{N_{t_0}(\omega)+1}(\omega),$  there exists $\varepsilon_1(\omega)>0$ and $\delta_1(\omega)>0$ such that for any $t$ satisfying $|t-~t_0|~\leq~\delta_1(\omega)$  we have $x_0-\varepsilon_0(\omega)<X_{T_{N_{t}}}^*< x_0+\varepsilon_0(\omega).$\\
That means for $|t-t_0|<\delta_1(\omega)$ and $|x-x_0|<\varepsilon_1(\omega),$
$$ {\bf 1}_{\{\tau_x(\omega)> T_{N_t(\omega)}(\omega)\}}={\bf 1}_{\{\tau_{x_0}(\omega)> T_{N_{t_0}(\omega)}(\omega)\}}=0.$$
In the two above cases, we conclude that for any $\omega \in \Omega,$ there exists $\delta(\omega)  >0$ and $\varepsilon(\omega)  >0$ such that:
\begin{align}\label{valmax=valextreme}
|t-t_0|< \delta(\omega)  \mbox{ and } |x-x_0|<\varepsilon(\omega) \mbox{ imply that } {\bf 1}_{\{\tau_x(\omega)> T_{N_t(\omega)}(\omega)\}}={\bf 1}_{\{\tau_{x_0}(\omega)> T_{N_{t_0}(\omega)}(\omega)\}}
\end{align}
\\
$\bullet$
If $X_{T_{N_{t_0}(\omega)}(\omega)}^*(\omega)$ is reached at $v<T_{N_{t_0}(\omega)}(\omega),$ either it is less than $x_0$ or more than $x_0.$

If  $X_{T_{N_{t_0}(\omega)}(\omega)}^*(\omega)= X_v(\omega)<x_0,$ since $T_{N_{t_0}(\omega)}(\omega) <~t_0 <~ T_{N_{t_0}(\omega)+1}(\omega),$  there exists  $\varepsilon_4(\omega)>0$ and $\delta_4(\omega)>0$ such that for any $t$ satisfying $|t-~t_0|~\leq~\delta_4(\omega)$  we have $x_0-\varepsilon_4(\omega)<X_{T_{N_{t}(\omega)}}^*(\omega)< x_0+\varepsilon_4(\omega).$\\
That means for $(t,x)$ such that $|t-t_0|< \delta_4(\omega)$ and $|x-x_0|<\varepsilon_4(\omega),$ we have 
$${\bf 1}_{\{\tau_x> T_{N_t}\}}={\bf 1}_{\{\tau_{x_0}> T_{N_{t_0}}\}}=1 .$$

If  $X_{T_{N_{t_0}(\omega)}(\omega)}^*(\omega)= X_v(\omega)> x_0,$ since $T_{N_{t_0}(\omega)}(\omega) <~t_0 <~ T_{N_{t_0}(\omega)+1}(\omega),$  there exists $\varepsilon_5(\omega)>0$ and $\delta_5(\omega)>0$ such that for any $t$ satisfying $|t-~t_0|~\leq~\delta_5(\omega)$  we have $x_0-\varepsilon_5(\omega)<X_{T_{N_{t}}}^*< x_0+\varepsilon_5(\omega).$
That means for $|t-t_0|<\delta_5(\omega)$ and $|x-x_0|<\varepsilon_5(\omega),$
$$ {\bf 1}_{\{\tau_x> T_{N_t}\}}={\bf 1}_{\{\tau_{x_0}> T_{N_{t_0}}\}}=0.$$

If  $X_{T_{N_{t_0}(\omega)}(\omega)}^*(\omega)= X_v(\omega)= x_0,$
we consider a function which is equal to $\tilde{f}$ on $]0,+\infty[\times]0,+\infty[$ and $0$ on $]0,+\infty[\times \mathbb{R}_-.$  We denote it $\tilde{f}$ again.
This function is everywhere continuous.
 Since $T_{N_{t_0}(\omega)}(\omega) <~t_0 <~ T_{N_{t_0}(\omega)+1}(\omega),$ and  $(t,x) \longrightarrow\tilde{f}(t-T_{N_t(\omega)}(\omega),x-X_{T_{N_t(\omega)}(\omega)}(\omega))$ is continuous at $(t_0,x_0),$ 
  there exists $\varepsilon_6(\omega)>0$ and $\delta_6(\omega)>0$ such that for any $(t,x)$ satisfying $|t-~t_0|~\leq~\delta_6(\omega)$ and $|x-x_0|\leq \varepsilon_6(\omega)$, we have
\begin{align}
\label{valeur=valinterm=}
&\lim_{(t,x) \rightarrow (t_0,x_0)}{\bf 1}_{\{X_{T_{N_t(\omega)}(\omega)}^*(\omega)<x\}}\tilde{f}(t-T_{N_t(\omega)}(\omega),x-X_{T_{N_t(\omega)}(\omega)}(\omega))\\
&={\bf 1}_{\{X_{T_{N_{t_0}(\omega)}(\omega)}^*<x_0\}}\tilde{f}(t_0-T_{N_{t_0(\omega)}(\omega)},x_0-X_{T_{N_{t_0}(\omega)}(\omega)}(\omega))=0.
\nonumber
\end{align}
Using uniform integrability of the family 
$\left(1_{\tau_{x}>T_{N_{t}}}\tilde{f}(t-T_{N_{t}}, x-X_{T_{N_{t}}}),~~t>0,~~x>0 \right),$ obtained from Lemma \ref{ui},  the continuity of 
$$(t,x) \longrightarrow \mathbb{E}\left(1_{\tau_{x}>T_{N_{t}}}\tilde{f}(t-T_{N_{t}}, x-X_{T_{N_{t}}})\right)$$
at $(t_0,x_0)$ follows.\\
 
Since the family $\left(({\bf 1}_{\{\tau_x>t\}}(1-F_Y)(x-X_t), t>0,~ x>0\right)$ is bounded by $1$, it is  uniformly integrable and we proceed analogously to the previous to obtain the continuity at $(t_0,x_0)$
of 
$$(t,x) \longrightarrow \mathbb{E}\left({\bf 1}_{\{\tau_x>t\}}(1-F_Y)(x-X_t)\right).$$

\end{proof}

 We now study the regularity  with respect to time at $0$.
\subsection{Regularity of  $f_{\tau_{x}}$ with respect to time at $0$}\label{regularityat0}
{The next two propositions show that for any fixed $x>0,$ the  $f_{\tau_{x}}$ density law is continuous with respect to time at $0.$ }
\begin{proposition} 
\label{prop1b}
Let be  $ x > 0 $ fixed, we have
\begin{align*}
\lim_{t \mapsto 0}  \mathbb{E}\left( {\bf 1}_{\tau_x>t}[1-F_Y](x-X_{t})\right)=
\frac{1}{2×}\left(2- F_{Y}(x)-F_{Y}(x_{-})\right).
\end{align*}
\end{proposition}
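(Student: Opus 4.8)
The plan is to reduce the problem to the Brownian-with-drift part $\tilde X_t = mt + W_t$, whose distribution is explicit, and then to exploit the symmetry of the Gaussian law. Let $T_1$ denote the first jump time of $N$, so that $\mathbb{P}(T_1 \le t) = 1 - e^{-\lambda t}$. On the event $\{T_1 > t\}$ the paths of $X$ and $\tilde X$ coincide on $[0,t]$, whence $\{\tau_x > t\}\cap\{T_1>t\} = \{\tilde\tau_x > t\}\cap\{T_1>t\}$ and $X_t = \tilde X_t$ there. Splitting the expectation according to $\{T_1 \le t\}$ or $\{T_1 > t\}$, the first piece is bounded in absolute value by $\mathbb{P}(T_1 \le t) = 1-e^{-\lambda t} \to 0$, while on the second, since $\sigma(W_s, s\ge 0)$ and $\sigma(N_s, s\ge 0)$ are independent and ${\bf 1}_{\tilde\tau_x > t}(1-F_Y)(x-\tilde X_t)$ is $\sigma(W_s,s\le t)$–measurable,
$$\mathbb{E}\left({\bf 1}_{\tau_x > t}(1-F_Y)(x - X_t){\bf 1}_{T_1 > t}\right) = e^{-\lambda t}\,\mathbb{E}\left({\bf 1}_{\tilde\tau_x > t}(1-F_Y)(x - \tilde X_t)\right).$$
Since $e^{-\lambda t}\to 1$, it therefore suffices to show that $\mathbb{E}\left({\bf 1}_{\tilde\tau_x > t}(1-F_Y)(x - \tilde X_t)\right)$ tends to $\tfrac12\left(2 - F_Y(x) - F_Y(x_-)\right)$ as $t \to 0$.

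Next I would remove the indicator. Because $\tilde X$ has continuous paths and $\tilde X_0 = 0 < x$, we have $\tilde\tau_x > 0$ almost surely, so $\mathbb{P}(\tilde\tau_x \le t) = \int_0^t \tilde f(u,x)\,du \to 0$ as $t\downarrow 0$ by \eqref{densitycontinue}. As $0\le 1-F_Y\le 1$,
$$\left|\mathbb{E}\left({\bf 1}_{\tilde\tau_x > t}(1-F_Y)(x - \tilde X_t)\right) - \mathbb{E}\left((1-F_Y)(x - \tilde X_t)\right)\right| \le \mathbb{P}(\tilde\tau_x \le t) \to 0,$$
so the problem reduces to computing $\lim_{t\to 0}\mathbb{E}\left((1-F_Y)(x - \tilde X_t)\right)$.

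For this limit I would use that $\tilde X_t$ has the $\mathcal N(mt,t)$ law, hence the same law as $mt + \sqrt t\, Z$ with $Z$ standard normal, so
$$\mathbb{E}\left((1-F_Y)(x - \tilde X_t)\right) = \mathbb{E}\left[(1-F_Y)\!\left(x - \sqrt{t}\,(Z + m\sqrt{t})\right)\right].$$
As $t\downarrow 0$ one has $Z + m\sqrt t \to Z$, so on $\{Z>0\}$ the argument $x - \sqrt t\,(Z+m\sqrt t)$ is eventually $<x$ and converges to $x$, whence $(1-F_Y)(x-\tilde X_t)\to 1-F_Y(x_-)$; on $\{Z<0\}$ it is eventually $>x$ and converges to $x$, whence by right‑continuity of $F_Y$ it tends to $1-F_Y(x)$; and $\{Z=0\}$ is negligible. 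Dominated convergence (domination by $1$) then gives
$$\lim_{t\to 0}\mathbb{E}\left((1-F_Y)(x - \tilde X_t)\right) = \mathbb{P}(Z>0)\,(1-F_Y(x_-)) + \mathbb{P}(Z<0)\,(1-F_Y(x)) = \tfrac12\left(2 - F_Y(x) - F_Y(x_-)\right),$$
which is the asserted value; note that $\mathbb{P}(Z>0)=\mathbb{P}(Z<0)=\tfrac12$, so the drift $m$ plays no role in the limit.

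The one genuine subtlety, which is why this argument is not a one-line dominated-convergence statement applied to the original expectation, is that $(1-F_Y)(x - X_t)$ does \emph{not} converge almost surely as $t\to 0$: the Brownian fluctuations make $x - X_t$ oscillate around $x$, so the limiting value cannot be read off pathwise. Passing to the explicit Gaussian law and separating according to the sign of the increment is what resolves this, the two sides contributing $F_Y(x_-)$ and $F_Y(x)$ with equal weight $\tfrac12$; everything else is routine.
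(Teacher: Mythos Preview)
Your proof is correct and follows the same skeleton as the paper's: split according to whether a jump occurs before $t$, reduce to the drifted Brownian motion $\tilde X$, dispose of the indicator coming from $\{\tilde\tau_x>t\}$, and then exploit the symmetry of the Gaussian increment to obtain the two one-sided limits $F_Y(x)$ and $F_Y(x_-)$ with weight $\tfrac12$ each.

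Where you differ is in the execution of the last two steps, and in both places your argument is the more elementary one. To discard the indicator, the paper replaces ${\bf 1}_{\{\tilde X_t^*<x\}}$ by ${\bf 1}_{\{\tilde X_t<x\}}$ and bounds the error by $\mathbb{P}(\tilde X_t<x\le \tilde X_t^*)$, which it then shows tends to $0$ via the explicit joint density of $(\tilde X_t^*,\tilde X_t)$ and monotone convergence; you instead simply bound by $\mathbb{P}(\tilde\tau_x\le t)\to 0$, which needs nothing beyond $\tilde\tau_x>0$ a.s. For the final limit, the paper splits on $\{\tilde X_t\le 0\}$ versus $\{0\le\tilde X_t<x\}$ and appeals to dominated convergence in a somewhat compressed way; your device of writing $\tilde X_t=mt+\sqrt{t}\,Z$ and conditioning on $\{Z>0\}$, $\{Z<0\}$ makes the pointwise convergence genuinely hold (the sign of $Z$ is fixed in $t$), so dominated convergence applies cleanly. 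The trade-off is that the paper's intermediate estimate $\mathbb{P}(\tilde X_t<x\le\tilde X_t^*)\to 0$ is reused later (equation~\eqref{fluct} in the proof of Proposition~\ref{cazero}), whereas your shortcut does not produce it.
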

\begin{proof}
We have
\begin{align*}
  \mathbb{E}( {\bf 1}_{\tau_x>t}[1-F_Y](x-X_{t}))= \mathbb{E}({\bf 1}_{\{N_t=0\}} {\bf 1}_{\tau_x>t}[1-F_Y](x-X_{t}))+
   \mathbb{E}({\bf 1}_{\{N_t>0\}} {\bf 1}_{\tau_x>t}[1-F_Y](x-X_{t})).
\end{align*}
But 
\begin{align*}
(i)& ~~0 \leq \lim_{t \mapsto 0}  \mathbb{E}({\bf 1}_{\{N_t>0\}} {\bf 1}_{\tau_x>t}[1-F_Y](x-X_{t})) \leq 
\lim_{t \mapsto 0} \mathbb{P}(N_t \geq 1)= \lim_{t \mapsto 0} 1- e^{-a t} = 0
\\
(ii)&~~\mathbb{E}\left({\bf 1}_{\{N_t=0\}} {\bf 1}_{\tau_x>t}[1-F_Y](x-X_{t})\right)=e^{-at}\mathbb{E}\left({\bf 1}_{\{\tilde{X}_t^*<x\}}[1-F_Y](x-\tilde{X}_t)\right)
\end{align*}

 We first remark that 
 \small{
$$|\mathbb{E}\left({\bf 1}_{\{\tilde{X}_t^*<x\}}[1-F_Y](x-\tilde{X}_t)\right)-\mathbb{E}\left({\bf 1}_{\{\tilde{X}_t<x\}}[1-F_Y](x-\tilde{X}_t)\right)|\leq |\mathbb{E}({\bf 1}_{\{\tilde{X}_t^*<x\}}-{\bf 1}_{\{\tilde{X}_t<x\}})|=\mathbb{P}(\{\tilde{X}_t<x\leq \tilde{X}_t^*\}).$$  }
The density function of  $(\tilde{X}_t^*,\tilde{X}_t)$ given by Corollary 3.2.1.2  p. 147  \cite{jeanblanc2009mathematical} yields
$$\mathbb{P}(\{\tilde{X}_t<x\leq \tilde{X}_t^*\})=
\int_x^\infty db\int_{-\infty}^b da \frac{2(2b-a)}{\sqrt{2\pi t^3}} \exp\left[-\frac{(2b-a)^2}{2t}+ma-\frac{m^2}{2}t\right].$$
This integral is bounded (with respect to a multiplicative constant $C$) by
$$\mathbb{P}(\{\tilde{X}_t<x\leq \tilde{X}_t^*\})\leq C \int_x^\infty db\int_{-\infty}^b da \frac{(2b-a)}{\sqrt{t^3}} 
\exp\left[-\frac{(2b-a)^2}{2t}\right].$$
Notice that the application $t\to \frac{(2b-a)}{\sqrt{t^3}} \exp\left[-\frac{(2b-a)^2}{2t}\right]$
is decreasing to $0$ when $t\downarrow 0.$ So  Lebesgue's monotonous convergence theorem proves that
\begin{align}\label{fluct}
\lim_{t\to 0} \mathbb{P}(\{\tilde{X}_t<x\leq \tilde{X}_t^*\})=0.
\end{align}
Secondly,  
\begin{align*}
\mathbb{E}\left({\bf 1}_{\{\tilde{X}_t<x\}}[1-F_Y](x-\tilde{X}_t)\right)= \mathbb{E}\left({\bf 1}_{\{\tilde{X}_t\leq 0\}}[1-F_Y](x-\tilde{X}_t)\right)+
\mathbb{E}\left({\bf 1}_{\{0\leq \tilde{X}_t<x\}}[1-F_Y](x-\tilde{X}_t)\right).
\end{align*}
Since $F_Y$ is bounded and RCLL and   $\tilde{X}$ continuous, Lebesgue's dominated convergence theorem yields
\begin{align*}
\lim_{t \mapsto 0} \mathbb{E}\left({\bf 1}_{\{N_t=0\}} {\bf 1}_{\tau_x>t}[1-F_Y](x-X_{t})\right)=
\frac{1}{2×}\left(2- F_{Y}(x)-F_{Y}(x_{-})\right).
\end{align*}
\end{proof}

\begin{proposition}
\label{prop-A-1}
Let be  $ x > 0 $ fixed, we have
\begin{align*}
\lim_{t\rightarrow 0}{\mathbb E} \left( {\mathbf 1}_{\{\tau_x >T_{N_t}\}}\tilde{f}(t-T_{N_t},x-X_{T_{N_t}})\right)=\frac{\lambda}{4} \left(F_Y(x) -F_Y(x_-)\right).
\end{align*}
\end{proposition}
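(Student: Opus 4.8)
Here is the strategy I would follow. Decompose the expectation according to the number $N_t$ of jumps before time $t$, i.e. write it as the sum of the contributions of $\{N_t=0\}$, $\{N_t=1\}$ and $\{N_t\ge2\}$, and show that in the limit $t\to0$ only $\{N_t=1\}$ survives, and within it only the atom of the law of $Y_1$ at the level $x$.

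The two extreme events are easy. On $\{N_t=0\}$ one has $T_{N_t}=T_0=0$ and $X_{T_{N_t}}=X_0=0$, so this piece equals $e^{-\lambda t}\tilde f(t,x)$, which tends to $0$ because $\tilde f(t,x)=\frac{x}{\sqrt{2\pi t^{3}}}\exp[-\tfrac1{2t}(x-mt)^{2}]\to0$ by \eqref{densitycontinue}. On $\{N_t\ge2\}$ I would invoke the uniform integrability of the family $\big(\mathbf 1_{\{\tau_x>T_{N_t}\}}\tilde f(t-T_{N_t},x-X_{T_{N_t}})\big)_{t>0}$ provided by Lemma~\ref{ui}: since $\mathbb P(N_t\ge2)=1-e^{-\lambda t}-\lambda t e^{-\lambda t}=O(t^{2})\to0$, the contribution of $\{N_t\ge2\}$ is $o(1)$.

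The core is $\{N_t=1\}$. Conditionally on $\{N_t=1\}$ the jump time $T_1$ is uniform on $[0,t]$ and independent of $W$ and of $Y_1$; on $\{N_t=1,\ T_1=s\}$ the path before $s$ coincides with $\tilde X$, so $X_{T_1}=\tilde X_s+Y_1$ and $\{\tau_x>T_1\}=\{\tilde X_s^{*}<x\}\cap\{\tilde X_s+Y_1<x\}$, where $\tilde X_s^{*}=\sup_{u\le s}\tilde X_u$ and the second indicator is absorbed into the support of $\tilde f$ (which, as in the proof of Proposition~\ref{propocontinuity}, we take to vanish when its second argument is $\le0$). Writing $\mu_Y$ for the law of $Y_1$, this yields
\begin{equation*}
\mathbb E\big(\mathbf 1_{\{N_t=1\}}\mathbf 1_{\{\tau_x>T_{N_t}\}}\tilde f(t-T_{N_t},x-X_{T_{N_t}})\big)
=\lambda e^{-\lambda t}\!\int_{\mathbb R}\!\phi(t,y)\,\mu_Y(dy),\qquad
\phi(t,y):=\int_0^t\mathbb E\big(\mathbf 1_{\{\tilde X_s^{*}<x\}}\,\tilde f(t-s,\,x-y-\tilde X_s)\big)\,ds .
\end{equation*}
Bounding $\mathbf 1_{\{\tilde X_s^{*}<x\}}\le1$ gives the majorant $\phi_1(t,y):=\int_0^t\mathbb E\big(\tilde f(t-s,x-y-\tilde X_s)\big)\,ds\ge\phi(t,y)\ge0$; using the explicit Gaussian law of $\tilde X_s$ and the shape of $\tilde f$, the expectation over $\tilde X_s$ can be computed by completing the square (the drift disappears up to the scalar $e^{-(x-y-mt)^{2}/(2t)}$, leaving a Gaussian integral), and then the $ds$-integral is elementary. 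For $y=x$ one is left with the weight $\tfrac1{2\pi t}\sqrt{s/(t-s)}$, and the identity $\int_0^t\sqrt{s/(t-s)}\,ds=\tfrac{\pi t}{2}$ gives $\phi_1(t,x)=\tfrac14e^{-m^{2}t/2}$; for $y\ne x$ the same computation gives $\phi_1(t,y)\le e^{-(x-y-mt)^{2}/(2t)}\big(\tfrac{|x-y|}{\sqrt{2\pi t}}+\tfrac14\big)$, a quantity bounded uniformly on $(0,1]\times\mathbb R$ and tending to $0$ for each fixed $y\ne x$. Dominated convergence against $\mu_Y$ then gives $\int_{\{y\ne x\}}\phi(t,y)\,\mu_Y(dy)\to0$.

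It remains to prove $\phi(t,x)\to\tfrac14$. As $0\le\phi(t,x)\le\phi_1(t,x)\to\tfrac14$, I only need the lower bound, i.e. that the correction $R(t):=\int_0^t\mathbb E\big(\mathbf 1_{\{\tilde X_s^{*}\ge x\}}\,\tilde f(t-s,-\tilde X_s)\big)\,ds\to0$; this I would obtain exactly as in the proof of Proposition~\ref{prop1b}, from the joint density of $(\tilde X_s^{*},\tilde X_s)$ (Corollary 3.2.1.2 p.~147 of \cite{jeanblanc2009mathematical}), which forces a factor $e^{-x^{2}/(2s)}\le e^{-x^{2}/(2t)}$ and hence super-exponential decay of $R(t)$. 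Adding the three contributions yields $\lambda e^{-\lambda t}\mu_Y(\{x\})\cdot\tfrac14+o(1)\to\tfrac{\lambda}{4}\big(F_Y(x)-F_Y(x_-)\big)$. I expect the main obstacle to be this last interchange of limit and $\mu_Y$-integral, namely producing a $\mu_Y$-integrable (indeed constant) majorant for $\phi(t,y)$ valid for all small $t$ despite the singularity of $\tilde f(u,a)$ as $(u,a)\to(0,0)$, together with the precise evaluation of the atom's contribution, which ultimately rests on $\int_0^t\sqrt{s/(t-s)}\,ds=\pi t/2$.
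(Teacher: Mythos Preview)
Your overall strategy---decompose according to $N_t\in\{0,1,\ge2\}$---and your analysis of the core case $\{N_t=1\}$ are correct and run parallel to the paper's. In particular, your reduction to $\phi_1(t,x)=\tfrac14 e^{-m^2t/2}$ via $\int_0^t\sqrt{s/(t-s)}\,ds=\pi t/2$ is exactly the paper's final identity (written there as $\int_0^1\frac{\sqrt s}{2\pi\sqrt{1-s}}\,ds=\tfrac14$ using $\beta(3/2,1/2)$). The main organizational difference is how the indicator $\mathbf 1_{\{\tilde X_s^{*}<x\}}$ is handled: you bound it by $1$, compute $\phi_1$, and then dispose of the correction $R(t)$ by reading off a factor $e^{-(2x-a)^2/(2s)}\le e^{-2x^2/t}$ from the joint density of $(\tilde X_s^{*},\tilde X_s)$; the paper instead computes $\mathbb E[\mathbf 1_{\{\tilde X_u^{*}<x\}}\,|\,\tilde X_u]=1-e^{-2x(x-\tilde X_u)/u}$ via Lemma~\ref{lem-A-3} and carries this factor through a sequence of changes of variable before applying dominated convergence. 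Both routes are valid; your early separation of the atom $\{Y_1=x\}$ from $\{Y_1\ne x\}$ is arguably more transparent than the paper's single dominated-convergence step on the full $(s,g',Y_1)$-integral.

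There is, however, a genuine gap in your treatment of $\{N_t\ge2\}$. Lemma~\ref{ui} is stated only for $t\in[t_1,t_2]$ with $t_1>0$, and its proof produces bounds that blow up as $t\downarrow0$; it does \emph{not} give uniform integrability of the family over $t\in(0,1]$, so the inference ``UI plus $\mathbb P(N_t\ge2)=O(t^2)$ hence $o(1)$'' is not justified by that lemma. The paper closes this gap differently: after dropping the indicator $\mathbf 1_{\{\tau_x>T_{N_t}\}}$ and conditioning on the Poisson process, it uses the explicit estimate (their Lemma~A.1 of \cite{coutin2011first}, $p=1$)
\[
\mathbb E\bigl[\tilde f(u,\mu+\sigma G)\mathbf 1_{\{\mu+\sigma G>0\}}\bigr]\ \le\ \frac{1}{\sqrt{2\pi}\,(\sigma^2+u)}\Bigl(\frac{\sigma\,\mathbb E|G|}{\sqrt u}+C_{1/2}+|m|\sqrt{\sigma^2+u}\Bigr),
\]
with $u=t-T_{N_t}$, $\sigma^2=T_{N_t}$, $\sigma^2+u=t$. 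The only term not controlled by $\tfrac1t\,\mathbb P(N_t\ge2)=O(t)$ is $\tfrac1t\,\mathbb E\bigl[\mathbf 1_{\{N_t\ge2\}}\sqrt{T_{N_t}/(t-T_{N_t})}\bigr]$, and Lemma~\ref{lem-A-1} with $\beta=-\tfrac12$ gives $\mathbb E\bigl[\mathbf 1_{\{N_t\ge2\}}(t-T_{N_t})^{-1/2}\bigr]=O(t^{3/2})$, so this too is $O(t)$. Replacing your appeal to Lemma~\ref{ui} by this estimate (or by any direct bound on $\mathbb E\bigl[\mathbf 1_{\{N_t\ge2\}}(t-T_{N_t})^{-1/2}\bigr]$) repairs the argument.
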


\begin{proof}
(i) We first deal with 
$$\lim_{t\to 0}{\mathbb E} \left( {\mathbf 1}_{\{N_t=0,\tau_x >T_{N_t}\}}\tilde{f}(t-T_{N_t},x-X_{T_{N_t}})\right)=
lim_{t\to 0}{\mathbb E} ( {\mathbf 1}_{\{N_t=0\}})\tilde{f}(t,x))=lim_{t\to 0}(1-e^{-at})\tilde{f}(t,x)=0$$
using Definition (\ref{densitycontinue}).
 
(ii) Then we deal with 
$${\mathbb E} \left( {\mathbf 1}_{\{N_t\geq 2,\tau_x >T_{N_t}\}}\tilde{f}(t-T_{N_t},x-X_{T_{N_t}})\right).$$
We use Lemma A.1 of \cite{coutin2011first} for $p=1,$  law of $G$ being the Gaussian law ${\cal N}(0,1)$,
then
\begin{align*}
{\mathbb E} \left(\tilde{f}(u ,\mu+ \sigma G){\mathbf 1}_{\{ \mu + \sigma G>0\}} \right)=
\frac{1}{\sqrt{2 \pi}} \frac{e^{- \frac{(\mu - mu)^2}{2(\sigma^2 +u)}}}{\sqrt u(\sigma^2 +u)}
{\mathbb E} \left[ \left( \sigma G + \sqrt{\frac{u}{\sigma^2 +u}} (\mu- mu) + m \sqrt{u(\sigma^2 +u)}\right)_+ \right].
\end{align*}
Using $C_{1/2}= \sup \sqrt{y} e^{-\frac{y^2}{2}},$ $y=\frac{\mu-mu}{\sqrt{\sigma^2+u}},$ 
\begin{align*}
{\mathbb E} \left(\tilde{f}(u,\mu +\sigma G){\mathbf 1}_{\{ \mu + \sigma G>0\}} \right)\leq 
\frac{1}{\sqrt{2 \pi}} \left[ \frac{1}{\sqrt{u}(\sigma^2 +u)}
 \left( {\mathbb E} |\sigma G|+ \sqrt{u} C_{1/2} + | m| \sqrt{u(\sigma^2 +u)}\right) \right].
\end{align*}
For $u=t- T_{N_t},$ $\sigma^2= T_{N_t}$ and $u+\sigma^2=t,$ we obtain using the independence between the Poisson process and the Brownian motion 
\begin{align*}
{\mathbb E} \left( {\mathbf 1}_{\{N_t \geq 2\} }{\mathbf 1}_{\{\tau_x > T_{N_t}\}}
\tilde{f}( t-T_{N_{t}}, x- X_{T_{N_t}}) \right) \leq 
{\mathbb E} \left( {\mathbf 1}_{\{ N_t \geq 2\}}\frac{1}{t \sqrt{2 \pi}}
\left[ \frac{E[|\sigma G|]}{\sqrt{t- T_{N_t}}} + C_{1/2} + |m|\sqrt t \right] \right).
\end{align*}
So, using ${\mathbb P}(\{ N_t \geq 2\})=0(t^2)$ and  Lemma \ref{lem-A-1},  
\begin{align*}
\lim_{t \rightarrow 0} {\mathbb E} \left( {\mathbf 1}_{\{N_t \geq 2\} }{\mathbf 1}_{\{\tau_x > T_{N_t}\}}
\tilde{f}( t-T_{N_{t}}, x- X_{T_{N_t}}) \right)=0.
\end{align*}
(iii) Finally we deal  with
$$
A_t= {\mathbb E} \left( {\mathbf 1}_{\{N_t=1,\tau_x >T_{N_t}\}}\tilde{f}(t-T_{N_t},x-X_{T_{N_t}})\right)=
{\mathbb E} \left( {\mathbf 1}_{\{N_t=1,\tau_x >T_1\}}\tilde{f}(t-T_1,x-X_{T_1})\right).
$$
Since  the event $\{N_t=1,\tau_x >T_1\}= \{T_1\leq t <T_2,\tilde{X}^*_{T_1} <x,\tilde{X}_{T_1} + Y_1 <x\}$ we have 
\begin{align*}
A_t={\mathbb E} \left( {\mathbf 1}_{\{T_1\leq t <T_2,\tilde{X}^*_{T_1} <x,\tilde{X}_{T_1} + Y_1 <x\}}\tilde{f}(t-T_{1},x-\tilde{X}_{T_1}-Y_1)\right).
\end{align*}
 Using the law of $T_2-T_1$ and its independence  from $T_1, \tilde{X}_{T_1}^*,\tilde{X}_{T_1}, Y_1$, it follows that 
\begin{align*}
A_t={\mathbb E} \left( e^{-a( t-T_1)}{\mathbf 1}_{\{T_1\leq t ,\tilde{X}^*_{T_1} <x, \tilde{X}_{T_1} + Y_1 <x\}}\tilde{f}(t-T_{1},x-\tilde{X}_{T_1}-Y_1)\right)
\end{align*}
Using the law of  $T_1$ and the independence between $T_1$ and $(\tilde{X},Y)$, yields
\begin{align*}
A_t=ae^{-at} \int_0^t du {\mathbb E} \left( {\mathbf 1}_{\{ \tilde{X}^*_u <x, \tilde{X}_u + Y_1 <x\}}\tilde{f}(t-u,x-\tilde{X}_{u}-Y_1)\right).
\end{align*}
Since $\tilde{X}_u$ and $ Y_1$ are independent, conditioning by  $(\tilde{X}_u, Y_1)$ and using Lemma \ref{lem-A-3} for  $c= x,$   
\begin{align*}
A_t=ae^{-at} \int_0^t du {\mathbb E} \left( {\mathbf 1}_{\{\tilde{X}_u  <\min (x, x-Y_1) \}}
\left[ 1 - e^{-\frac{2x^2 -2 x\tilde{X}_u}{u}}\right] f(t-u,x-\tilde{X}_{u}-Y_1)\right).
\end{align*}
The change of  variable $u=ts$ leads to
\begin{align*}
A_t=ae^{-at} \int_0^1 t d s{\mathbb E} \left( {\mathbf 1}_{\{  \tilde{X}_{st}  <\min (x, x-Y_1) \}}
\left[ 1 - e^{-\frac{2x^2 -2 x\tilde{X}_{ts}}{ts}}\right] f(t(1-s),x-\tilde{X}_{st}-Y_1)\right).
\end{align*}
The density of  $\tilde{X}_{ts}~: $ 
$\frac{1}{\sqrt{2\pi ts}} e^{- \frac{(g-mts)^2}{2ts}}$
and  $\tilde{f}$ defined in (\ref{densitycontinue})  yield
\begin{align*}
&A_t=ae^{-at}
 \int_0^1 t ds 
 \\
& {\mathbb E} \left(\int_{-\infty}^{\min (x, x-Y_1)} 
\left[ 1 - e^{-\frac{2x^2 -2 xg}{ts}}\right] \frac{ x-Y_1 - g}{\sqrt{2\pi t^3(1-s)^3}} e^{- \frac{(x-g-Y_1 - mt(1-s))^2}{2t(1-s} } \frac{1}{\sqrt{2 \pi ts}}e^{- \frac{(g-mts)^2}{2ts}} dg\right).
\end{align*}

Let be  
$z=x-Y_1 -mt(1-s),$ $y= mts,$ $u = t(1-s)$ and  $v=ts,$
with
\begin{align*}
&\frac{vz+uy}{u+v} = \frac{ts[x-Y_1 -mt(1-s)]+ t(1-s)mts}{t}=s(x-Y_1),\\
&z-y= x-Y_1 -mt(1-s) - mts= x-Y_1-mt.
\end{align*}
 By Lemma \ref{lem-A-0}, 
\begin{align*}
&A_t=ae^{-at} \int_0^1t ds \\
& {\mathbb E} \left(\int_{-\infty}^{\min (x, x-Y_1)}
\left[ 1 - e^{-\frac{2x^2 -2 xg}{ts}}\right] \frac{ x-Y_1 - g}{\sqrt{(2\pi)^2 t^4(1-s)^3s}} e^{- \frac{(g-s(x-Y_1))^2}{2ts(1-s)} - \frac{(x-Y_1- mt)^2}{2t^2s(1-s)}}dg \right). 
\end{align*}
A new change of  variable $g'= \frac{g-s(x-Y_1)}{\sqrt{ts(1-s)}}$ meaning  $ g= \sqrt{ts(1-s)} g' + s(x-Y_1),$
 and $x-Y_1-g= (x-Y_1)(1-s) - \sqrt{ts(1-s)}g'$ implies
 \small{
\begin{align*}
&A_t=ae^{-at} \int_0^1 ds \int dg'\sqrt{t^3s(1-s)} \\
& {\mathbb E} \left({\mathbf 1}_{\{   g'   <\frac{\min (x, x-Y_1) -s(x-Y_1)}{\sqrt{ts(1-s)} } \}}
\left[ 1 - e^{-\frac{2x^2 -2 x(\sqrt{ts(1-s)} g '+ s(x-Y_1))}{ts}}\right] \frac{ (x-Y_1 )(1-s) - \sqrt{ts(1-s)}g'}{\sqrt{(2\pi)^2 t^4(1-s)^3 s}} e^{-\frac{( g')^2}{2}- \frac{(x-Y_1-mt)^2}{2t^2s(1-s)}} \right).\\
\end{align*} }
This would mean
\begin{align*}
&A_t=ae^{-at} \int_0^1 ds \int dg'\\ 
 &{\mathbb E} \left[ {\mathbf 1}_{\{   g'   <\frac{\min (x, x-Y_1) -s(x-Y_1)}{\sqrt{ts(1-s)} } \}}
\left[ 1 - e^{-\frac{2x^2 -2 x(\sqrt{ts(1-s)} g '+ s(x-Y_1))}{ts}}\right] e^{-\frac{( g')^2}{2}- \frac{(x-Y_1-mt)^2}{2t^2s(1-s)}}
\left[ \frac{ (x-Y_1 )  }{2\pi \sqrt{t }}-  \frac{g'\sqrt{s}}{2\pi \sqrt{1-s}} \right]\right].
\end{align*}
 On the set 
 $\{g' <c\},$ $2x^2 - 2xc >0$. This would mean 
 $\{g'   <\frac{\min (x, x-Y_1 -s(x-Y_1)}{\sqrt{ts(1-s)} }) \},$  
 where $c=\frac{x-Y_1 -s(x-Y_1)}{\sqrt{ts(1-s)} }$  
 $ -\frac{2x^2 -2 x(\sqrt{ts(1-s)} g '+ s(x-Y_1))}{ts} <0,$ 
and
\begin{align*}
\left| 1 - e^{-\frac{2x^2 -2 x(\sqrt{ts(1-s)} g '+ s(x-Y_1))}{t}}\right|{\mathbf 1}_{\{   g'   <\frac{\min (x, x-Y_1) -s(x-Y_1)}{\sqrt{ts(1-s)} } \}} \leq 1;
\end{align*}
with
\begin{align*}
\lim_{t\rightarrow 0} \left[ 1 - e^{-\frac{2x^2 -2 x(\sqrt{ts(1-s)} g '+ s(x-Y_1))}{t}}\right|]
{\mathbf 1}_{\{ g'   <\frac{\min (x, x-Y_1) -s(x-Y_1)}{\sqrt{ts(1-s)} } \}} =1.
\end{align*}
 
Morever,
\begin{align*}
\frac{\left| x-Y_1\right| }{\sqrt{t}}e^{- \frac{(x-Y_1-mt)^2}{2t^2s(1-s)}}&\leq \frac{|mt| + \left| x-Y_1-mt \right| }{\sqrt{t}}e^{- \frac{(x-Y_1-mt)^2}{2t^2s(1-s)}}
\\  
 & \leq \frac{ |mt| + C_{1/2}\sqrt{t^2s(1-s)}}{\sqrt{t}}=
 \sqrt t\left(|m|+ C_{1/2}\sqrt{s(1-s)}\right)
\end{align*}
Thus
$$
\lim_{t\rightarrow 0} \frac{\left| x-Y_1\right| }{\sqrt{t}}e^{- \frac{(x-Y_1-mt)^2}{2t^2s(1-s)}}=0.
$$

Finally,
\begin{align*}
e^{-\frac{( g')^2}{2}- \frac{(x-Y_1-mt)^2}{2t^2s(1-s)}}\frac{|g'|\sqrt{s}}{2\pi \sqrt{1-s}}\leq e^{-\frac{( g')^2}{2}}\frac{|g'|\sqrt{s}}{2\pi \sqrt{1-s}}
\end{align*}
and

\begin{align*}
\lim_{t\rightarrow 0}  {\mathbf 1}_{\{   g'   <\frac{\min (x, x-Y_1) -s(x-Y_1)}{\sqrt{ts(1-s)} } \}}e^{-\frac{( g')^2}{2}- \frac{(x-Y_1-mt)^2}{2t^2s(1-s)}}\frac{g'\sqrt{s}}{2\pi \sqrt{1-s}}=
 e^{-\frac{( g')^2}{2}}\frac{g'\sqrt{s}}{2\pi \sqrt{1-s}}{\mathbf 1}_{\{Y_1 = 0\}} {\mathbf 1}_{\{g'<0\}}.
\end{align*}
By Lebesgue's dominated convergence theorem ,

\begin{align*}
\lim_{t\rightarrow 0} A_t = -a {\mathbb P}(Y_1=x)\int_0^1 ds \int_{\{g'<0\}} dg'e^{-\frac{( g')^2}{2}}  \frac{g'\sqrt{s} }{2 \pi \sqrt{1-s}} 
\end{align*}
and $\lim_{t \rightarrow 0} A_t=a{\mathbb P}(Y_1=x) \int_0^1 \frac{\sqrt{s} }{2 \pi \sqrt{1-s}} ds=\frac{a{\mathbb P}(Y=x)}{4}.$
Indeed, $\beta(3/2,1/2)= \frac{1}{2} \gamma(1/2)^2$ and $\gamma(1/2)=\pi.$
\end{proof}

\section{The joint law}\label{jointlaw}
If the default time coincides with a jump time of the process $X$, it is also  important to have information on the deficit, namely overshoot, right after the default and on the surplus, namely undershoot of the firm, immediately before the default time. Therefore, A. Volpi et al. \cite{roynette2008asymptotic} 
deal with the asymptotic behavior of the triplet $(\tau_x, K_x, L_x)$    by showing after a renormalization of $\tau_x$ that it converges in distribution as $x$ goes to $\infty$.
 To characterize the joint law  of $(\tau_x, K_x, L_x)$ on $\mathbb{R}_+ \times \mathbb{R}_+ \times \mathbb{R}_+$ for any $x>0$, this section's  main result is
  the next theorem. From now on, we consider two continuous bounded functions $\Phi$ and $\Psi.$ The methodology used for the proof is inspired from  Coutin-Dorobantu \cite{coutin2011first} who study the law of $\tau_x$ which is here the first marginal distribution. It consists in splitting $\mathbb{E}(1_{t<\tau_{x}\leq t+h}\Phi (K_{x})\Psi(L_{x}))$ following the values of $N_{t+h}-N_t$ for $t=0$ then for $t>0.$ 
\begin{theo} \label{th1} 
The joint law of the triplet $(\tau_x, K_x, L_x)$, conditionally on $\{ \tau_x < \infty\}$,  is given on $\mathbb{R}_+ \times \mathbb{R}_+ \times \mathbb{R}_+$ by $ p(.,.,.,x)$ such that:
\begin{align*}
p(0,dk,dl)&= \frac{\lambda}{4}[F_Y(x)-F_Y(x_-)]\delta_{\{0,0,0\}}(dt,dk,dl)+ \lambda F_l(dk)\delta_{\{0,x\}}(dt,dl)\\ &+\frac{\lambda}{2}\Delta F_Y(x)\delta_{\{0,0,x\}}(dt,dk,dl)
\end{align*}
and for every $t>0$, 
 \begin{align*}
 p(dt,dk,dl)&=\mathbb{E}[{\bf 1}_{\{\tau_{x}>T_{N_{t}}\}}\tilde{f}(t-T_{N_{t}},x-X_{T_{N_{t}}})]\delta_{\{0,0\}}(dk,dl)dt\\
&+\lambda \mathbb{E}\left[{\bf 1}_{\{k\geq 0, l\geq 0\}} {\bf 1}_{\{\tau_{x}>T_{N_{t}}\}}f_0(x-X_{T_{N_{t}}}-l)\right]F_l(dk)dldt\\
&- \lambda\mathbb{E}\left[{\bf 1}_{\{k\geq 0,l\geq 0\}}{\bf 1}_{\{\tau_{x}>T_{N_{t}}\}}f_0(X_{T_{N_{t}}}-x-l)
\exp(2m(x-X_{T_{N_{t}}}))\right]F_l(dk) dldt\\
\end{align*}
where
$f_0$ is the density function of a Gaussian random variable with mean $\mu= m(t-T_{N_{t}})$ and variance 
$\sigma^2 = t-T_{N_{t}}, ~~ \tilde{f}$ is defined by (\ref{densitycontinue}),
 $F_l(dk)$ is the image of $F_Y(dk)$ by the map $y\mapsto y-l$ and $ \Delta F_Y(x)=~F_Y(x)~-~F_Y(x_-)$.
\end{theo}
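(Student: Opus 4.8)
The strategy is to identify the measure $p(\cdot,\cdot,\cdot,x)$ through its action on products $\Phi(K_x)\Psi(L_x)$ of bounded continuous functions, by differentiating in the time variable. For $h>0$ set $I(t,h):=\mathbb{E}\!\left[\mathbf{1}_{\{t<\tau_x\le t+h\}}\Phi(K_x)\Psi(L_x)\right]$. The arguments of Proposition \ref{propocontinuity} (combined, at $t=0$, with Propositions \ref{prop1b} and \ref{prop-A-1}) show that $t\mapsto\lim_{h\downarrow0}h^{-1}I(t,h)$ is continuous on $[0,\infty)$, so it suffices to compute $\lim_{h\downarrow0}h^{-1}I(t,h)$ for each $t>0$ and $\lim_{h\downarrow0}h^{-1}I(0,h)$; since $\mathbb{P}(\tau_x=0)=0$, the additive set function $[a,b]\mapsto\mathbb{E}[\mathbf{1}_{\{a<\tau_x\le b\}}\Phi(K_x)\Psi(L_x)]$ is then the integral over $[a,b]$ of that (continuous) density, and a standard approximation argument in $(\Phi,\Psi)$ yields the stated disintegration. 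Two structural remarks are used throughout: $K_x\ge0$, and $\{K_x=0\}=\{L_x=0\}$ is exactly the event that $X$ crosses $x$ continuously; if the crossing occurs at a jump time then $K_x=Y_{N_{\tau_x}}-L_x$ with $L_x=x-X_{\tau_x^-}>0$, so $(K_x,L_x)$ and $Y_{N_{\tau_x}}$ determine one another.

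For $t>0$ I would decompose $I(t,h)$ according to $N_{t+h}-N_t\in\{0,1,\ge2\}$. The contribution of $\{N_{t+h}-N_t\ge2\}$ is $O(h^2)$ since $\mathbb{P}(N_{t+h}-N_t\ge2)=O(h^2)$, hence drops out. On $\{N_{t+h}=N_t\}$ any crossing in $(t,t+h]$ is continuous, so $\Phi(K_x)\Psi(L_x)=\Phi(0)\Psi(0)$; conditioning on the last jump $(T_{N_t},X_{T_{N_t}})$ before $t$, the first passage of $x$ by the Brownian motion with drift restarted at $X_{T_{N_t}}$ has density $\tilde{f}(\cdot,x-X_{T_{N_t}})$, and — passing the limit inside the expectation by the uniform integrability of Lemma \ref{ui}, as in Proposition \ref{propocontinuity} — $h^{-1}$ times this term converges to $\mathbb{E}[\mathbf{1}_{\{\tau_x>T_{N_t}\}}\tilde{f}(t-T_{N_t},x-X_{T_{N_t}})]\,\Phi(0)\Psi(0)$, the first line. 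On $\{N_{t+h}-N_t=1\}$ only the crossing occurring at the unique jump $T_{N_t+1}\in(t,t+h]$ survives: a continuous crossing strictly before $T_{N_t+1}$ requires the independent events ``a jump in $(t,t+h]$'' and ``a continuous crossing in $(t,t+h]$'' together, hence is $O(h^2)$; a continuous crossing strictly after it forces $X$ to cover the gap $x-X_{T_{N_t+1}^{-}}-Y_{N_t+1}>0$ in time $<h$, and since $X_{T_{N_t+1}^{-}}$ has a density the set $\{X_{T_{N_t+1}^{-}}+Y_{N_t+1}=x\}$ is null, making this case $o(h)$. For the surviving case, write $L_x=x-X_{T_{N_t+1}^{-}}$, $K_x=Y_{N_t+1}-L_x$, use that the jump intensity is constant equal to $\lambda$ (so $h^{-1}$ times the probability of exactly one jump contributes the prefactor $\lambda$ after summing over $T_{N_t}$), and apply the reflection principle to the post-$T_{N_t}$ Brownian motion with drift $m$: as $h\downarrow0$ the law of $X_{T_{N_t+1}^{-}}$ on the event that $X$ stays below $x$ on $[0,T_{N_t+1})$ has, in the variable $l=x-X_{T_{N_t+1}^{-}}$, density $f_0(t-T_{N_t},x-X_{T_{N_t}}-l)-e^{2m(x-X_{T_{N_t}})}f_0(t-T_{N_t},X_{T_{N_t}}-x-l)$ on $\{l>0\}$. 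Combining with the independent jump law $F_Y$ and the substitution $k=y-l$ gives precisely the second and third lines.

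For $t=0$ I would decompose $I(0,h)$ according to $N_h\in\{0,1,\ge2\}$. The terms $\{N_h=0\}$ and $\{N_h\ge2\}$ contribute $0$ after dividing by $h$: the first because $h^{-1}\mathbb{P}(\tilde{\tau}_x\le h)=h^{-1}\int_0^h\tilde{f}(u,x)\,du\to\tilde{f}(0^+,x)=0$ (recall $x>0$), the second by the $O(h^2)$ bound. On $\{N_h=1\}$ the single jump occurs at $T_1\in(0,h]$ with $\tilde{X}_{T_1}\to0$ in probability; three sub-cases arise. If $Y_1>x$, the crossing is at $T_1$ with $(K_x,L_x)\to(Y_1-x,x)$, contributing $\lambda\,\mathbb{E}[\mathbf{1}_{\{Y_1>x\}}\Phi(Y_1-x)]\,\Psi(x)$, i.e. the $\lambda F_l(dk)\,\delta_{\{0,x\}}$ term. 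If $Y_1=x$, then on $\{\tilde{X}_{T_1}\ge0\}$ (conditional probability $\to\tfrac{1}{2}$) the crossing is at $T_1$ with $(K_x,L_x)\to(0,x)$, yielding $\tfrac{\lambda}{2}\Delta F_Y(x)\,\Phi(0)\Psi(x)$, while on $\{\tilde{X}_{T_1}<0\}$ the process restarts at $X_{T_1}=x+\tilde{X}_{T_1}<x$ and must then cross $x$ continuously within $(T_1,h]$, giving $(K_x,L_x)=(0,0)$ and contributing $\tfrac{\lambda}{4}\Delta F_Y(x)\,\Phi(0)\Psi(0)$ — the delicate term. Finally $Y_1<x$ is $o(h)$: only jump values within $O(\sqrt{h})$ of $x$ (a window of mass $O(\sqrt{h})$, unless $F_Y$ charges $x$, already handled) have a non-vanishing crossing probability, so the contribution is $O(h^{3/2})$. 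Summing the three surviving terms yields $p(0,dk,dl)$, and the choice $\Phi\equiv\Psi\equiv1$ recovers the value $f_{\tau_x}(0,x)$ of Propositions \ref{prop1b} and \ref{prop-A-1}, a useful consistency check.

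The main obstacle is the evaluation of the ``$\tilde{X}_{T_1}<0$, then continuous crossing'' term: one must first notice that it is \emph{not} negligible (both the displacement $|\tilde{X}_{T_1}|$ and the remaining time $h-T_1$ are of order $h$), and then compute its scaling limit. Conditionally on $N_h=1$, $T_1$ is uniform on $(0,h]$, so $T_1/h\Rightarrow U\sim\mathrm{Unif}(0,1)$ and $\tilde{X}_{T_1}/\sqrt{T_1}\Rightarrow Z\sim\mathcal{N}(0,1)$, and the conditional probability of a continuous crossing of $x$ in $(T_1,h]$ converges to $2\,\mathbb{P}\!\big(\mathcal{N}(0,1)>\sqrt{U/(1-U)}\,(-Z)\big)$ on $\{Z<0\}$; the resulting expectation equals $\tfrac{1}{4}$, by the same Beta-function identity $\beta(3/2,1/2)=\tfrac{1}{2}\gamma(1/2)^2$ that is used in the proof of Proposition \ref{prop-A-1} (where this exact integral appears in the computation of $A_t$), whence the coefficient $\tfrac{\lambda}{4}\Delta F_Y(x)$ of $\delta_{\{0,0,0\}}$. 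The remaining work is bookkeeping: verifying that every other sub-event is genuinely $o(h)$, exhibiting the reflection-principle density that produces the two $f_0$ terms, and justifying — via Lemma \ref{ui} and the continuity of Proposition \ref{propocontinuity} — the interchanges of limit, expectation and $t$-integration needed to pass from the pointwise density identities to the measure identity asserted in the theorem.
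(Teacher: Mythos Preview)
Your plan is correct and follows the paper's own route almost verbatim: compute $\lim_{h\downarrow0}h^{-1}I(t,h)$ by splitting on $N_{t+h}-N_t\in\{0,1,\ge2\}$, discard the $\ge2$ term, read off the continuous--crossing contribution on $\{N_{t+h}=N_t\}$ via the density $\tilde f$, and on the one--jump event use the strong Markov property at $T_{N_t}$ together with the joint density of $(\tilde X^{*},\tilde X)$ (reflection principle) to obtain the two $f_0$ terms, exactly as in Propositions~\ref{casaut} and~\ref{loijointe}; at $t=0$ your split by the value of $Y_1$ is a cosmetic reorganisation of the paper's split by the relative position of $\tau_x$ and $T_1$ in Proposition~\ref{cazero}. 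One small caveat: your claimed rate $O(h^{3/2})$ for the sub-case $\{N_h=1,\ Y_1<x\}$ tacitly assumes $F_Y$ has a bounded density near $x$, which the paper does not impose---for general $F_Y$ you only get $o(h)$ (by dominated convergence, since $x-Y_1>0$ on $\{Y_1<x\}$), but that is all the argument requires.
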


\begin{rem}
Referring to \cite{roynette2008asymptotic}, for all $x >0$, the first passage  time $\tau_x $ is finite almost surely if and only if
 $ m+ \mathbb{E}(Y_{1}) \geq 0.$
\end{rem}
To prove the theorem, we use Propositions \ref{cazero} and \ref{casaut}. Indeed, Proposition \ref{cazero} gives 
the law at time $t=0$ and Proposition \ref{casaut} deals with time $t>0$. The proof of Proposition \ref{casaut} is broken in three parts: Step 1, Step 2 and Proposition  \ref{loijointe}.

\begin{proposition}\label{cazero}
\begin{align*}
\lim_{h\rightarrow 0} \frac{1}{h} \mathbb{E}(1_{\tau_{x}\leq h}\Phi (K_{x})\Psi(L_{x}))&= \Phi(0)\Psi(0)\frac{\lambda}{4}[F_Y(x)-F_Y(x_-)]+\lambda \mathbb{E}[\Phi(Y_1-x)\Psi(x){\bf 1}_{\{Y_1>x\}}]\\
&+ \frac{\lambda}{2}\mathbb{E}[\Phi(0)\Psi(Y_1){\bf 1}_{\{Y_1=x\}}].
\end{align*}
\end{proposition}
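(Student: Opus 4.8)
The plan is to split $\mathbb{E}\big({\bf 1}_{\{\tau_x\le h\}}\Phi(K_x)\Psi(L_x)\big)$ according to $N_h\in\{0\},\{1\},\{\ge 2\}$, as announced just before the statement. Since $\Phi,\Psi$ are bounded and $\mathbb{P}(N_h\ge2)=O(h^2)$, the part on $\{N_h\ge2\}$ disappears after dividing by $h$. On $\{N_h=0\}$ the process equals $\tilde X$ on $[0,h]$, so $\{\tau_x\le h\}=\{\tilde{\tau}_x\le h\}$, the crossing is continuous, $K_x=L_x=0$, and this part equals $\Phi(0)\Psi(0)\,e^{-\lambda h}\int_0^h\tilde f(u,x)\,du$ by (\ref{densitycontinue}); as $\int_0^h\tilde f(u,x)\,du$ decays like $e^{-x^2/(2h)}$, faster than any power of $h$, it too disappears after dividing by $h$. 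Everything thus comes from $\{N_h=1\}$.

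On $\{N_h=1\}$, with $T_1$ the jump time and $\tilde X_{T_1^-}=\tilde X_{T_1}$, there are four cases for how $x$ is first reached: \textbf{(a)} $\tilde X^*_{T_1}\ge x$ ($x$ reached continuously before $T_1$), whose contribution is $\le\mathbb{P}(\tilde{\tau}_x\le h)$, negligible after dividing by $h$; \textbf{(b)} $\tilde X^*_{T_1}<x$ and $\tilde X_{T_1}+Y_1\ge x$ ($x$ reached by the jump), where $\tau_x=T_1$, $K_x=\tilde X_{T_1}+Y_1-x$, $L_x=x-\tilde X_{T_1}$; \textbf{(c)} $\tilde X^*_{T_1}<x$, $\tilde X_{T_1}+Y_1<x$, but $x$ reached continuously in $(T_1,h]$, where $K_x=L_x=0$; \textbf{(d)} $x$ not reached by time $h$, no contribution. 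Conditionally on $\{N_h=1\}$, $T_1$ is uniform on $[0,h]$ and independent of $(\tilde X,Y_1)$, so the contribution of \textbf{(b)} is
$$
\lambda e^{-\lambda h}\cdot\frac1h\int_0^h\phi(u)\,du,\qquad \phi(u)=\mathbb{E}\big[{\bf 1}_{\{\tilde X^*_u<x,\ \tilde X_u+Y_1\ge x\}}\Phi(\tilde X_u+Y_1-x)\Psi(x-\tilde X_u)\big].
$$
As $u\to0$, $\tilde X_u\to0$ and $\tilde X^*_u\to0$ a.s.; splitting $\phi$ according to the sign of $Y_1-x$, the part on $\{Y_1>x\}$ tends to $\mathbb{E}[\Phi(Y_1-x)\Psi(x){\bf 1}_{\{Y_1>x\}}]$, the part on $\{Y_1<x\}$ vanishes because $\mathbb{P}(\tilde X_u\ge x-Y_1\mid Y_1)\to0$ on $\{Y_1<x\}$ (dominated convergence), and on the atom $\{Y_1=x\}$ the indicator is ${\bf 1}_{\{\tilde X_u\ge0\}}$ with $\mathbb{P}(\tilde X_u\ge0)\to\tfrac12$, giving $\tfrac12\Phi(0)\mathbb{E}[\Psi(Y_1){\bf 1}_{\{Y_1=x\}}]$. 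Hence the contribution of \textbf{(b)} tends to $\lambda\,\mathbb{E}[\Phi(Y_1-x)\Psi(x){\bf 1}_{\{Y_1>x\}}]+\tfrac\lambda2\,\mathbb{E}[\Phi(0)\Psi(Y_1){\bf 1}_{\{Y_1=x\}}]$, the second and third terms of the claim, while \textbf{(d)} contributes nothing.

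For \textbf{(c)}, on $\{\tau_x>T_1\}$ the level $x$ is reached continuously after $T_1$ exactly when the post-$T_1$ Brownian part rises by $x-X_{T_1}>0$ within $h-T_1$; by the strong Markov property at $T_1$ and (\ref{densitycontinue}) this has conditional probability $\int_0^{h-T_1}\tilde f(v,x-X_{T_1})\,dv$, and since $T_2>h$ on $\{N_h=1\}$, a direct check gives
$$
\mathbb{E}\big[{\bf 1}_{\{N_h=1,\ \tau_x>T_1,\ \tau_x\le h\}}\big]=\int_0^h A_u\,du+O(h^2),\qquad A_u=\mathbb{E}\big[{\bf 1}_{\{N_u=1,\ \tau_x>T_1\}}\tilde f(u-T_1,x-X_{T_1})\big],
$$
$A_u$ being exactly the quantity studied in part (iii) of the proof of Proposition \ref{prop-A-1}. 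Since that proof establishes $A_u\to\tfrac\lambda4[F_Y(x)-F_Y(x_-)]$ as $u\to0$, the contribution of \textbf{(c)} divided by $h$ tends to $\Phi(0)\Psi(0)\tfrac\lambda4[F_Y(x)-F_Y(x_-)]$ (one may also get this directly via the reflection principle and the scaling $T_1=hs$, which reduces it to the Beta identity $\int_0^1\sqrt s\,(1-s)^{-1/2}\,ds=\beta(3/2,1/2)=\pi/2$ used at the end of that same proof). Adding the three surviving contributions yields the statement. The step I expect to require the most care is the passage $u\to0$ in \textbf{(b)} — isolating the factor $\tfrac12$ on the atom $\{Y_1=x\}$ and using dominated convergence to kill the part of the law of $Y_1$ lying below $x$ — together with the bookkeeping identifying \textbf{(c)} with $A_u$. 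As a consistency check, $\Phi\equiv\Psi\equiv1$ recovers $f_{\tau_x}(0,x)$ of (\ref{fonctiontaux}).
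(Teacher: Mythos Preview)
Your proposal is correct and follows the same decomposition as the paper: split according to $N_h\in\{0\},\{1\},\{\ge 2\}$, then on $\{N_h=1\}$ split by the relative position of $\tau_x$ and $T_1$ (your (a),(b),(c) are exactly the paper's $A_1(h),A_2(h),A_3(h)$). For case (c) the paper simply cites Step~3 of Subsection~2.1 in \cite{coutin2011first}, whereas you make the argument self-contained by writing $\mathbb{P}(N_h=1,\,T_1<\tau_x\le h)=\int_0^h A_u\,du+O(h^2)$ and invoking $A_u\to\tfrac{\lambda}{4}[F_Y(x)-F_Y(x_-)]$ from Proposition~\ref{prop-A-1}; that identity is correct (the $O(h^2)$ comes from the event $\{T_2\le h\}$, on which the inner integral is bounded by~$1$).

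One small point of care in (b): on the atom $\{Y_1=x\}$ the indicator ${\bf 1}_{\{\tilde X_u\ge 0\}}$ does not converge almost surely, so dominated convergence alone does not produce the factor $\tfrac12$; you should argue instead via $\mathbb{P}(\tilde X_u\ge 0)\to\tfrac12$ combined with the uniform continuity of $\Phi,\Psi$ near $0$ and $x$. The paper handles the analogous step by first replacing ${\bf 1}_{\{\tilde X^*_u<x\}}$ by ${\bf 1}_{\{\tilde X_u<x\}}$ via (\ref{fluct}) and then applying dominated convergence on the two pieces $\{Y_1\neq x\}$ and $\{Y_1=x\}$ separately.
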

\begin{proof} 
We split $\mathbb{E}(1_{\tau_{x}\leq h}\Phi (K_{x})\Psi(L_{x}))$ according to the values of $N_h$:
\begin{align*}
\mathbb{E}(1_{\tau_{x}\leq h}\Phi (K_{x})\Psi(L_{x}))&= \mathbb{E}(1_{\tau_{x}\leq h}{\bf 1}_{\{N_h=0\}}\Phi (K_{x})\Psi(L_{x}))+ \mathbb{E}(1_{\tau_{x}\leq h}{\bf 1}_{\{N_h=1\}}\Phi (K_{x})\Psi(L_{x})\\
&+ \mathbb{E}(1_{\tau_{x}\leq h}{\bf 1}_{\{N_h\geq 2\}}\Phi (K_{x})\Psi(L_{x}).
\end{align*}
By hypothesis, $\Phi$ and $\Psi$ are bounded  and on the event $\{ N_h=0\}$, the law of $\tau_x$ is the one of 
 $\tilde{\tau}_x$, so has  the continuous density $\tilde{f}(.,x)$ defined in (\ref{densitycontinue}). Since
\begin{align*}
\mathbb{E}(1_{\tau_{x}\leq h}{\bf 1}_{\{N_h=0\}}\Phi (K_{x})\Psi(L_{x}))= \Phi (0)\Psi(0)\mathbb{P}(\tilde{\tau}_x \leq h)
\end{align*}
and
\begin{align*}
| \mathbb{E}(1_{\tau_{x}\leq h}{\bf 1}_{\{N_h\geq 2\}}\Phi (K_{x})\Psi(L_{x})|\leq ||\Phi||_\infty ||\Psi||_\infty \mathbb{P}(N_h\geq 2),
\end{align*}
it follows that     
\begin{align}\label{sansaut}
\lim_{h \rightarrow 0} \frac{1}{h}\mathbb{E}(1_{\tau_{x}\leq h}{\bf 1}_{\{N_h=0\}}\Phi (K_{x})\Psi(L_{x}))=0
\end{align}
and
\begin{align*}
\lim_{h \rightarrow 0} \frac{1}{h}\mathbb{E}(1_{\tau_{x}\leq h}{\bf 1}_{\{N_h\geq 2\}}\Phi (K_{x})\Psi(L_{x})=0.
\end{align*}
It remains to study $\mathbb{E}(1_{\tau_{x}\leq h}{\bf 1}_{\{N_h=1\}}\Phi (K_{x})\Psi(L_{x})$. For this purpose, we split it as in Coutin and Dorobantu \cite{coutin2011first} according to the relative positions of $\tau_x$ and $T_1$ the first jump time of the process $N$.
\small{ 
\begin{align*}
\mathbb{E}(1_{\tau_{x}\leq h}{\bf 1}_{\{N_h=1\}}\Phi (K_{x})\Psi(L_{x}))&=\mathbb{E}(1_{\tau_{x}\leq h}{\bf 1}_{\{N_h=1\}}{\bf 1}_{\{\tau_x<T_1\}}\Phi (K_{x})\Psi(L_{x}))\\&+ \mathbb{E}(1_{\tau_{x}\leq h}{\bf 1}_{\{N_h=1\}}{\bf 1}_{\{\tau_x=T_1\}}\Phi (K_{x})\Psi(L_{x}))\\ &+\mathbb{E}(1_{\tau_{x}\leq h}{\bf 1}_{\{N_h=1\}}{\bf 1}_{\{\tau_x>T_1\}}\Phi (K_{x})\Psi(L_{x}))\\
&= A_1(h)+ A_2(h)+A_3(h).
\end{align*}  }
{ On the set $\{ \tau_x\leq h, \tau_x\neq T_1, N_h=1\}$, the process $X$ is continuous at $\tau_x$. and $K_x=L_x=0$ . Therefore, Step 1 and Step 3 of Subsection 2.1 in \cite{coutin2011first} imply that:  
\begin{align*}
\lim_{h \rightarrow 0} \frac{1}{h} A_1(h)=0
\end{align*}
and
\begin{align*}
\lim_{h \rightarrow 0} \frac{1}{h} \mathbb{E}(1_{\tau_{x}\leq h}{\bf 1}_{\{N_h=1\}}{\bf 1}_{\{\tau_x>T_1\}})
=  \frac{\lambda}{4}[F_Y(x)-F_Y(x_-)].
\end{align*}
}
To study $A_2(h)$, we observe that:
 \begin{align*}
&\tau_x= T_1 \mbox{ if and only if } \tilde{X}^*_{T_1}< x \mbox{ and } \tilde{X}_{T_1}+ Y_1> x\\
&\mbox{ and on this set } K_x= \tilde{X}_{T_1}+Y_1-x,~ 
L_x= x-\tilde{X}_{T_1}\mbox{ and } Y_1 > 0.
\end{align*}

Therefore ,
\begin{align*}
A_2(h)= \mathbb{E}\left({\bf 1}_{\{T_1 \leq h <T_2\}}{\bf 1}_{\{\tilde{X}_{T_1}^*<x,\tilde{X}_{T_1}+Y>x\}}
\Phi(\tilde{X}_{T_1}+Y-x)\Psi(x-\tilde{X}_{T_1})\right).
\end{align*}
The independence of $(S_i, i\geq 1)$ and $( Y_1, \tilde{X}, \tilde{\tau}_x)$ leads after integrating with respect to $S_2$, then $ S_1$ to 
\begin{align*}
 \frac{1}{h} A_2(h)= \frac{ \lambda e^{-\lambda h}}{h}\int_0^h
\mathbb{E}\left({\bf 1}_{\{\tilde{X}_{u}^*<x<\tilde{X}_u +Y_1\}}
\Phi(\tilde{X}_{u}+Y_1-x)\Psi(x-\tilde{X}_{u})\right)du\\
\end{align*}
\begin{align*}
|\mathbb{E}\left({\bf 1}_{\{\tilde{X}_{u}^*<x<\tilde X_u+Y_1\}}
\Phi(\tilde{X}_{u}+Y_1-x)\Psi(x-\tilde{X}_{u})\right)-
\mathbb{E}\left({\bf 1}_{\{\tilde{X}_{u}<x<\tilde X_u+Y_1\}}
\Phi(\tilde{X}_{u}+Y_1-x)\Psi(x-\tilde{X}_{u})\right)|
\end{align*}
is less than $||\Phi||_\infty ||\Psi||_\infty \mathbb{P}\left( \tilde{X}_u < x < \tilde{X}_u^*\right)$ which,
by (\ref{fluct}), goes to zero when $u$ goes to zero.
Hence, we obtain   
\begin{align*}
\lim_{h \rightarrow 0} \frac{1}{h} A_2(h)
=\lim_{h \rightarrow 0} \frac{ \lambda e^{-\lambda h}}{h}\int_0^h
\mathbb{E}\left({\bf 1}_{\{\tilde{X}_{u}<x<\tilde X_u+Y_1\}}
\Phi(\tilde{X}_{u}+Y_1-x)\Psi(x-\tilde{X}_{u})\right)du.
\end{align*}  
But, we have the equality of the sets 
\begin{align*}
 \{\tilde{X}_{u}<x{<\tilde X_u+Y_1}\}=
\left\{\{\tilde{X}_{u}<x{<\tilde X_u+Y_1}\}\cap\{Y_1\neq x\}\right\}\cup \left\{ \{\tilde{X}_{u}<x{<\tilde X_u+Y_1}\}\cap\{Y_1= x\}\right\}.
\end{align*}
It follows that 
\begin{align*}
\lim_{h \rightarrow 0} \frac{1}{h} A_2(h)
&=\lim_{h \rightarrow 0} \frac{ \lambda e^{-\lambda h}}{h}\int_0^h
\mathbb{E}\left({\bf 1}_{\left\{\{\tilde{X}_{u}<x{<\tilde X_u+Y_1}\}\cap\{Y_1\neq x\}\right\}}
\Phi(\tilde{X}_{u}+Y_1-x)\Psi(x-\tilde{X}_{u})\right)du\\
&+\lim_{h \rightarrow 0} \frac{ \lambda e^{-\lambda h}}{h}\int_0^h
\mathbb{E}\left({\bf 1}_{\left\{ \{\tilde{X}_{u}<x{<\tilde X_u+Y_1}\}\cap\{Y_1= x\}\right\}}
\Phi(\tilde{X}_{u}+Y_1-x)\Psi(x-\tilde{X}_{u})\right)du.
\end{align*}
Since $\Phi$ and $\Psi$ are Borel and  continuous bounded functions  and   $\tilde{X}$ continuous, Lebesgue's dominated convergence theorem yields
\begin{align*}
\lim_{h \rightarrow 0} \frac{1}{h} A_2(h)&= \lambda \mathbb{E}[\Phi(Y_1-x)\Psi(x){\bf 1}_{\{Y_1>x\}}]
+ \frac{\lambda}{2}\mathbb{E}[\Phi(0)\Psi(Y_1){\bf 1}_{\{Y_1=x\}}].
\end{align*}
\end{proof}
 
\begin{proposition}\label{casaut} 
The joint law of the triplet $(\tau_x, K_x, L_x)$, conditionally on $\{ \tau_x <~\infty\}$,  is defined on $\mathbb{R}_+^* \times \mathbb{R}_+ \times \mathbb{R}_+$
as following 
 \begin{align*}
 p(dt,dk,dl)&=\mathbb{E}(1_{\tau_{x}>T_{N_{t}}}\tilde{f}(t-T_{N_{t}},x-X_{T_{N_{t}}}))\delta_{\{0,0\}}(dk,dl)dt\\
&+{\bf 1}_{\{k\geq 0,l\geq 0\}} \mathbb{E}\left[ {\bf 1}_{\{\tau_{x}>T_{N_t}\}}
\frac{\lambda e^{(-\frac{(x-l-X_{T_{N_t}}-m(t-T_{N_t}))^2}{2(t-T_{N_t})})}} {\sqrt{2\pi (t\!-\!T_{N_t})}}  \right]F_l(dk)dldt\\
&-{\bf 1}_{\{k\geq 0,l\geq 0\}} \mathbb{E}\left[ {\bf 1}_{\{\tau_{x}>T_{N_t}\}}
\frac{\lambda e^{(-\frac{(x-X_{T_{N_t}}+l+m(t-T_{N_t}))^2}{2(t-T_{N_t})}+2m(x-X_{T_{N_t}}))}}{\sqrt{2\pi (t-T_{N_t})}} \right]F_l(dk) dldt\\
\end{align*}
where $F_l(dk)$ is the image of $F_Y(dk)$ by the map $ y \mapsto y-l$.
\end{proposition}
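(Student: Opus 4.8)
The plan is to compute, for fixed $t>0$ and small $h>0$, the quantity $\mathbb{E}(1_{t<\tau_x\le t+h}\Phi(K_x)\Psi(L_x))$, divide by $h$, and let $h\to 0$, exactly in the spirit of Coutin–Dorobantu. First I would decompose the expectation according to the value of $N_{t+h}-N_t\in\{0,1,\ge 2\}$. The $\{N_{t+h}-N_t\ge 2\}$ term is $O(h^2)$ since $\mathbb{P}(N_{t+h}-N_t\ge 2)=O(h^2)$ and $\Phi,\Psi$ are bounded, so it contributes nothing in the limit. This leaves the no-jump term and the one-jump term on the increment $(t,t+h]$.

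For the \textbf{no-jump term} $\{N_{t+h}-N_t=0\}$, on the event $\{\tau_x>T_{N_t}\}$ the process after $T_{N_t}$ behaves (up to the next jump) like a Brownian motion with drift started at $X_{T_{N_t}}$, so $\tau_x$ restricted here has, conditionally, the density $\tilde f(\cdot-T_{N_t},x-X_{T_{N_t}})$; crossing in $(t,t+h]$ without a jump forces continuity of $X$ at $\tau_x$, hence $K_x=L_x=0$. Using the strong Markov property at $T_{N_t}$, independence of the increments of $N$, and that $\mathbb{P}(N_{t+h}-N_t=0)=e^{-\lambda h}\to 1$, I would get
\begin{align*}
\lim_{h\to 0}\frac1h\,\mathbb{E}\big(1_{t<\tau_x\le t+h}1_{\{N_{t+h}-N_t=0\}}\Phi(K_x)\Psi(L_x)\big)
=\Phi(0)\Psi(0)\,\mathbb{E}\big(1_{\{\tau_x>T_{N_t}\}}\tilde f(t-T_{N_t},x-X_{T_{N_t}})\big),
\end{align*}
which yields the first line of $p(dt,dk,dl)$. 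Justifying the interchange of limit and expectation requires a uniform integrability bound on $\tilde f(t-T_{N_t},x-X_{T_{N_t}})$ near $t$, available from Lemma \ref{ui} as used in Proposition \ref{propocontinuity}.

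For the \textbf{one-jump term} $\{N_{t+h}-N_t=1\}$, I would split further according to whether $\tau_x<T_{N_t+1}$, $\tau_x=T_{N_t+1}$, or $\tau_x>T_{N_t+1}$, where $T_{N_t+1}$ is the jump time falling in $(t,t+h]$. The first case ($\tau_x<T_{N_t+1}$) again forces a continuous crossing in $(t,t+h]$ before the jump, so $K_x=L_x=0$; but then $\tau_x\le t+h$ and $\tau_x>t$ with the Brownian passage happening in a window of length $\le h$ \emph{after} we already know no crossing occurred by time $t$, and since we also need the jump in that same window, one obtains an $O(h^2)$ contribution — negligible. Symmetrically, the case $\tau_x>T_{N_t+1}$ needs the jump not to cause the crossing and then a later crossing still inside $(t,t+h]$, again $O(h^2)$. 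The surviving case is $\tau_x=T_{N_t+1}$: conditioning on $\{\tau_x>T_{N_t}\}$, writing $Z$ for the position of $X$ just before the jump time $u\in(t,t+h]$ and $Y$ for the jump, the crossing happens exactly at the jump iff $X$ has stayed below $x$ on $(T_{N_t},u)$ and $Z+Y>x$, with $K_x=Z+Y-x$, $L_x=x-Z$. After integrating out the (exponential) inter-jump laws, using the strong Markov property at $T_{N_t}$, and the change of variables $k=y-l$ (so $F_l(dk)$ is the pushforward of $F_Y(dy)$ by $y\mapsto y-l$, with $l=x-Z$), the inner object becomes the density of $Z=X_{u}$ started from $X_{T_{N_t}}$ restricted to $\{Z<x\}$, i.e. the density at level $x-l$ of a Gaussian with mean $m(u-T_{N_t})$ and variance $u-T_{N_t}$, but \emph{conditioned to stay below $x$}. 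The conditioning on staying below $x$ is exactly where the reflection principle enters: the sub-probability density of $X_u<x$ with running max below $x$ equals $f_0(\,\cdot\,) - e^{2m(x-X_{T_{N_t}})}f_0(\text{reflected argument})$, which produces the positive Gaussian term and the negative (reflected, exponentially weighted) term of $p(dt,dk,dl)$. Letting $h\to 0$ collapses the $\frac1h\int_t^{t+h}du$ to evaluation at $u=t$, giving $T_{N_t+1}\to t$, i.e. $t-T_{N_t}$ in the formulas.

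The \textbf{main obstacle} is the one-jump, $\tau_x=T_{N_t+1}$ case: carefully writing the event $\{\tau_x=T_{N_t+1}\}$ in terms of $\{\sup_{(T_{N_t},u)}X<x\}\cap\{X_{u^-}+Y>x\}$, applying the strong Markov property at $T_{N_t}$ to reduce to a Brownian-with-drift passage problem, and then invoking the reflection-principle formula (Corollary 3.2.1.2 of \cite{jeanblanc2009mathematical}, as in Proposition \ref{prop1b}) to identify the conditional sub-density of $X_u$ killed at level $x$ as $f_0 - e^{2m(x-X_{T_{N_t}})}f_0(\text{reflected})$. One must also check the dominated/uniform-integrability hypotheses so that $\lim_{h\to0}\frac1h\int_t^{t+h}$ may be taken inside all expectations; these follow from the same Gaussian tail estimates and Lemmas \ref{ui}, \ref{lem-A-1} already invoked in Section \ref{regularity}. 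The $O(h^2)$ bookkeeping for the discarded sub-cases is routine once one notes that each discarded event requires two independent rare coincidences inside a window of length $h$.
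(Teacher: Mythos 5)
Your plan follows essentially the same route as the paper's proof: the same decomposition according to $N_{t+h}-N_t\in\{0,1,\geq 2\}$, the same further split of the one-jump term according to the position of $\tau_x$ relative to the jump time (with the non-jump crossings inside the window shown to be negligible), the strong Markov property at $T_{N_t}$, and the joint density of the drifted Brownian motion and its running supremum (Corollary 3.2.1.2 of \cite{jeanblanc2009mathematical}) integrated over the supremum variable to produce the Gaussian term minus its reflected, exponentially weighted counterpart, followed by the change of variables giving $F_l(dk)\,dl$. The approach and all key ingredients match; the paper's Step 1 makes your $O(h^2)$ bookkeeping for the discarded sub-cases rigorous via a bound of the form $\frac{1}{h}\int_0^h\mathbb{E}\bigl({\bf 1}_{\{\tau_x>T_{N_t}\}}\tilde{f}(t-T_{N_t}+v,x-X_{T_{N_t}})\bigr)\,dv\cdot h$ together with continuity in $v$, exactly as you anticipate.
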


\begin{proof}
 We calculate
\begin{equation}
  \lim_{h \longrightarrow 0} \frac{1}{h×}\mathbb{E}(1_{t<\tau_{x}\leq t+h}\Phi (K_{x})\Psi(L_{x}))
\end{equation}
 for $t>0 $ fixed. For this purpose, we split $\mathbb{E}(1_{t<\tau_{x}\leq t+h}\Phi (K_{x})\Psi(L_{x}))$ according to the values of $N_{t+h}-N_{t}$ as following:
\begin{align}
\label{8} 
 \mathbb{E}(1_{t<\tau_{x}\leq t+h}\Phi (K_{x})\Psi(L_{x}))&= \mathbb{E}(1_{t<\tau_{x}\leq t+h}\Phi (K_{x})\Psi(L_{x})1_{N_{t+h}-N_{t}=0}) \nonumber\\
   & +  \mathbb{E}(1_{t<\tau_{x}\leq t+h}\Phi (K_{x})\Psi(L_{x})1_{N_{t+h}-N_{t}=1})\nonumber\\
   & +  \mathbb{E}(1_{t<\tau_{x}\leq t+h}\Phi (K_{x})\Psi(L_{x})1_{N_{t+h}-N_{t}\geq 2}).   
\end{align}
 So we deal the proof with three steps, the third one being Proposition
\ref{loijointe}

 (i) The third term of the right hand side of (\ref{8}) is upper bounded as following:
\begin{equation*}
 \mathbb{E}(1_{t<\tau_{x}\leq t+h}\Phi (K_{x})\Psi(L_{x})1_{N_{t+h}-N_{t}\geq 2})\leq
 ( 1-e^{ah}-ahe^{ah})\|\Phi\|_\infty \|\Psi\|_\infty.
\end{equation*}
Therefore 
\begin{equation*}
 \lim_{h\longrightarrow 0}\frac{1}{h×}\mathbb{E}(1_{t<\tau_{x}\leq t+h}\Phi (K_{x})\Psi(L_{x})1_{N_{t+h}-N_{t}\geq 2})=0.
\end{equation*}

 (ii) Let us study the first term on the right hand side of (\ref{8}). On the set
\begin{equation*}
 \{ \omega, \quad   N_{t+h}(\omega)-N_{t}(\omega)=~0\},\mbox{ we have }  L_{x}=~0=~K_{x}.  
\end{equation*}
Then
\begin{equation*}
 \mathbb{E}(1_{t<\tau_{x}\leq t+h}\Phi (K_{x})\Psi(L_{x})1_{N_{t+h}-N_{t}=0})=\Phi (0)\Psi (0)\mathbb{E}(1_{t<\tau_{x}\leq t+h}1_{N_{t+h}-N_{t}=0}).
\end{equation*}
Refer to Equation (4) et seq at Section (2.2) in \cite{coutin2011first}, we have
\begin{equation*}
 \lim_{h \longrightarrow 0}\frac{1}{h×}\mathbb{E}(1_{t<\tau_{x}\leq t+h}1_{N_{t+h}-N_{t}=0})=\mathbb{E}(1_{\tau_{x}>T_{N_{t}}}f(t-T_{N_{t}},x-X_{T_{N_{t}}})).
\end{equation*}
So, we  deduce that 
\begin{equation*}
 \lim_{h \longrightarrow 0}\frac{1}{h×} \mathbb{E}(1_{t<\tau_{x}\leq t+h}\Phi (K_{x})\Psi(L_{x})1_{N_{t+h}-N_{t}=0})=\Phi (0)\Psi (0)
\mathbb{E}(1_{\tau_{x}>T_{N_{t}}}f(t-T_{N_{t}},x-X_{T_{N_{t}}})).
\end{equation*}

 The third step is  Proposition \ref{loijointe} which deals with the middle term in (\ref{8}).

\begin{proposition} 
\label{loijointe}  When h goes to $0$,
  $$ h\mapsto \frac{1}{h}\mathbb{E}\left[1_{t<\tau_{x}\leq t+h}\Phi (K_{x})\Psi(L_{x})1_{N_{t+h}-N_{t}=1}\right] $$
  converges to 
\begin{align*}
&  \!\!\int_{0}^{\infty}\!\!\int_{0}^{+\infty}\!\!\Phi (k)\Psi(l) \mathbb{E}\left[ {\bf 1}_{\{\tau_{x}>T_{N_t}\}}
\frac{\lambda e^{(-\frac{(x-l-X_{T_{N_t}}-m(t-T_{N_t}))^2}{2(t-T_{N_t})})}} {\sqrt{2\pi (t\!-\!T_{N_t})}}  \right]F_l(dk)dl\\
&-\int_{0}^{+\infty}\!\!\int_{0}^{+\infty}\!\!  \Phi (k)\Psi(l)
 \mathbb{E}\left[ {\bf 1}_{\{\tau_{x}>T_{N_t}\}}
\frac{\lambda e^{(-\frac{(x-X_{T_{N_t}}+l+m(t-T_{N_t}))^2}{2(t-T_{N_t})}+2m(x-X_{T_{N_t}}))}}{\sqrt{2\pi (t-T_{N_t})}} \right]F_l(dk) dl\\
\end{align*}
where  $F_l(dk)$ is the image of $F_Y(dk)$ by the map $ y \mapsto y-l$.
\end{proposition}
\begin{proof}
We  split the middle term of (\ref{8}) according to the values of $N_t$. Since $\{N_t~=~n\}=~\{ T_n~\leq~t~<~T_{n+1}\}$ so 
\begin{align} \label{split1}
   \mathbb{E}(1_{t<\tau_{x}\leq t+h}\Phi (K_{x})\Psi(L_{x})1_{N_{t+h}-N_{t}=1})& =  \mathbb{E}(1_{t<\tau_{x}\leq t+h}\Phi (K_{x})\Psi(L_{x})
1_{T_{N_{t}}\leq t<T_{N_{t}+1}\leq t+h<T_{N_{t}+2}})\nonumber\\
& = \sum_{n\geq 0} \mathbb{E}(1_{t<\tau_{x}\leq t+h}\Phi (K_{x})\Psi(L_{x})1_{T_{n}\leq t<T_{n+1}\leq t+h<T_{n+2}}).  
\end{align}
We split again the right term of (\ref{split1})   according to
the relative positions between $\tau_x$ and $T_{n+1}$. It follows 
\begin{align} \label{split2}
 \mathbb{E}(1_{t<\tau_{x}\leq t+h}\Phi (K_{x})\Psi(L_{x})1_{N_{t+h}-N_{t}=1})& =
 \sum_{n\geq 0} \mathbb{E}(\Phi (K_{x})\Psi(L_{x}){\bf 1}_{T_{n}\leq t<\tau_{x}<T_{n+1}\leq t+h<T_{n+2}}) \nonumber \\
 &+ \sum_{n\geq 0} \mathbb{E}(\Phi (K_{x})\Psi(L_{x}){\bf 1}_{T_{n}\leq t<\tau_{x}=T_{n+1}\leq t+h<T_{n+2}})\nonumber \\
 &+ \sum_{n\geq 0} \mathbb{E}(\Phi (K_{x})\Psi(L_{x}){\bf 1}_{T_{n}\leq t<T_{n+1}<\tau_{x}\leq t+h<T_{n+2}}).
\end{align}
We recall
\begin{align*}
T_1=S_1,~~ T_2= S_1+S_2,~~~ \cdots,~~~ T_n= \sum_{i=1}^{n} S_i 
\end{align*}
where $\left( S_i \right)_{i \in \mathbb{N}^*} $ is a sequence of independent
 random variables  following an exponential law with parameter $\lambda$.\\
Let be
\begin{align*}
A_h^1&=\sum_{n\geq 0} \mathbb{E}(\Phi (K_{x})\Psi(L_{x}){\bf 1}_{T_{n}\leq t<\tau_{x}<T_{n+1}\leq t+h<T_{n+2}}) \\
A_h^2&= \sum_{n\geq 0} \mathbb{E}(\Phi (K_{x})\Psi(L_{x}){\bf 1}_{T_{n}\leq t<\tau_{x}=T_{n+1}\leq t+h<T_{n+2}})\nonumber \\
A_h^3&= \sum_{n\geq 0} \mathbb{E}(\Phi (K_{x})\Psi(L_{x}){\bf 1}_{T_{n}\leq t<T_{n+1}<\tau_{x}\leq t+h<T_{n+2}}).\nonumber
\end{align*} 
STEP 1: Here, we refer to the analysis of $ \frac{1}{h}B_1(h)$, Equation (4) et seq. in \cite{coutin2011first}.\\
On the sets $ \{T_{n}\leq t< \tau_{x}<T_{n+1}\} $ and $\{T_{n+1}<\tau_{x}\leq t+h<T_{n+2}\}$,
 we have $K_x=0=L_x$. So  
\begin{align*}
A_h^1&= \Phi(0)\Psi(0) \sum_{n\geq 0} \mathbb{E}({\bf 1}_{T_n \leq t<\tau_x<T_{n+1}\leq t+h<T_{n+2}}) \\
&= \Phi(0)\Psi(0) \sum_{n\geq 0} \mathbb{E}({\bf 1}_{T_n \leq t<\tau_x<T_{n}+S_{n+1}\leq t+h<T_{n}+S_{n+1}+S_{n+2}}).\\
\end{align*} 
Strong Markov property at $T_n$ yields
\begin{align*} 
A_h^1= \Phi(0)\Psi(0) \sum_{n\geq 0}\mathbb{E}\left( {\bf 1}_{\{T_n\leq t\}}{\bf 1}_{\{\tau_x >T_n\}}
\mathbb{E}^{T_n}({\bf 1}_{\{t-T_n<\tilde{\tau}_{x-X_{T_n}}<S_{n+1}<t+h-T_n<S_{n+1}+S_{n+2}\}})\right)
\end{align*}
where $\mathbb{E}^{T_n}(.)=\mathbb{E}(.|\mathcal{F}_{T_n})$.
Since $S_{n+2}$ is independent from $S_{n+1},~ \tau_{x-X_{T_n}}$ and  $T_n$, we obtain
\begin{align*}
A_h^1&= \Phi(0)\Psi(0) \sum_{n\geq 0}\mathbb{E}\left( {\bf 1}_{\{T_n\leq t\}}{\bf 1}_{\{\tau_x >T_n\}}
\mathbb{E}^{T_n}(e^{-\lambda(t+h-T_n-S_{n+1})}{\bf 1}_{\{t-T_n<\tilde{\tau}_{x-X_{T_n}}<S_{n+1}<t+h-T_n\}})\right)\\
&\leq e^{-\lambda h} \Phi(0)\Psi(0) \sum_{n\geq 0}\mathbb{E}\left( {\bf 1}_{\{\tau_x >T_n\}}
\mathbb{E}^{T_n}(e^{\lambda S_{n+1}}{\bf 1}_{\{t-T_n<\tilde{\tau}_{x-X_{T_n}}<S_{n+1}<t+h-T_n\}})\right).
\end{align*} 
The random variables $\tilde{\tau}_{x-X_{T_n}}$ and $S_{n+1}$ are independent and their laws admit a density. Therefore
\begin{align*}
\mathbb{E}^{T_n}(e^{\lambda S_{n+1}}{\bf 1}_{\{t-T_n<\tilde{\tau}_{x-X_{T_n}}<S_{n+1}<t+h-T_n\}})&=
\int_{t-T_n}^{t-T_n+h} \tilde{f}(u,x-X_{T_n}) \int_{u}^{t-T_n+h} \lambda ds du\\
&= \int_{t-T_n}^{t-T_n+h}\lambda[t-T_n+h-u] \tilde{f}(u,x-X_{T_n})  du.\\
\end{align*}
The change of variable $v=u-(t-T_n)$ implies  
\begin{align*}
\mathbb{E}^{T_n}(e^{\lambda S_{n+1}}{\bf 1}_{\{t-T_n<\tilde{\tau}_{x-X_{T_n}}<S_{n+1}<t+h-T_n\}})=
\int_0^h \lambda[h-v]\tilde{f}(t-T_n+v,x-X_{T_n})  dv.
\end{align*}
Thus,
\begin{align*}
A_h^1&\leq\lambda e^{-\lambda h} \Phi(0)\Psi(0) \sum_{n\geq 0} \int_0^h \mathbb{E}\left(
   {\bf 1}_{\{\tau_x>T_n\}}[h-v]\tilde{f}(t-T_n+v,x-X_{T_n})\right) dv \\
   &\leq \lambda e^{-\lambda h} \Phi(0)\Psi(0) \int_0^h \mathbb{E}\left(
   {\bf 1}_{\{\tau_x>T_{N_t}\}}[h-v]\tilde{f}(t-T_{N_t}+v,x-X_{T_{N_t}})\right) dv .
\end{align*}
Similarly to the computation of $A_h^1$, we have
\begin{align*}
A_h^3\leq  \lambda e^{-\lambda h} \Phi(0)\Psi(0) \int_0^h \mathbb{E}\left(
   {\bf 1}_{\{\tau_x>T_{N_t}\}}v\tilde{f}(t-T_{N_t}+v,x-X_{T_{N_t}})\right)dv .
\end{align*}
So, 
\begin{align*}
\frac{1}{h}[A_h^1 + A_h^3] \leq\lambda e^{-\lambda h} \Phi(0)\Psi(0) \int_0^h \mathbb{E}\left(
   {\bf 1}_{\{\tau_x>T_{N_t}\}}\tilde{f}(t-T_{N_t}+v,x-X_{T_{N_t}})\right)dv .
\end{align*}
Using the fact that the application  
\begin{align*}
v\mapsto \mathbb{E}\left( {\bf 1}_{\{\tau_x>T_{N_t}\}}\tilde{f}(t-T_{N_t}+v,x-X_{T_{N_t}})\right) \mbox{ is continuous. }
\end{align*}
yields
\begin{align*}
\lim_{h \mapsto 0} \frac{1}{h}[A_h^1 + A_h^3] =0.
\end{align*}
 STEP 2:
We now deal  with $A_h^2$. On the set $\{ \tau_x = T_{n+1}\}$, we have:
\begin{align*}
K_x= X_{\tau_x}-x=X_{T_{n+1}}-x=X_{T_n}+[mS_{1} + W_{S_{1}} +Y_{1}]\circ \Theta_{T_n} -x
\end{align*} 
and 
\begin{align*}
L_x=x-X_{\tau_x^-}=x-X_{T_n}-[mS_{1} + W_{S_{1}}]\circ \Theta_{T_n}
\end{align*}
where $\Theta$ is the shift operator. Since $\tilde{X}_{S_{1}}= [mS_{1} + W_{S_{1}}]$, then
\begin{align*}
K_x=X_{T_n}+ [\tilde{X}_{S_{1}}+ Y_{1}]\circ \Theta_{T_n} -x \mbox{ and }
L_x= x-X_{T_n}-\tilde{X}_{S_{1}}\circ \Theta_{T_n}.
\end{align*}
 So $A_h^2$ can be written as following: $A_h^2=$
 \small{
\begin{align*}
  \sum_{n\geq 0} \mathbb{E}\left({\bf 1}_{\tau_{x}=T_{n}+S_{n+1}>T_n}\Phi (X_{T_n}+\tilde{X}_{S_{n+1}} +Y_{n+1}-x)\Psi(x-X_{T_n}-\tilde{X}_{S_{n+1}})
{\bf 1}_{T_{n}\leq t<T_{n}+S_{n+1}\leq t+h<T_{n}+S_{n+1}+S_{n+2}}\right).
\end{align*} }
Strong Markov property applied at $T_n$ leads to $A_h^2=$
\footnotesize{
\begin{align*}
\sum_{n\geq 0} \mathbb{E}\left({\bf 1}_{\tau_{x}>T_{n}}{\bf 1}_{T_n\leq t}\mathbb{E}^{T_n}(\Phi (X_{T_n}+\tilde{X}_{S_{n+1}}\!\! +Y_{n+1}-x)\Psi(x-\!X_{T_n}-\!\tilde{X}_{S_{n+1}}){\bf 1}_{\tilde{\tau}_{x-X_{T_n}}\!=\!S_{n+1}}
{\bf 1}_{ t-T_{n}<S_{n+1}\leq t+h-T_{n}<S_{n+1}+S_{n+2}})\right).
\end{align*} 
} 
integrating with respect to  $S_{n+2}$ , we have
$e^{\lambda h}A_h^2=$
\footnotesize{ 
$$\!\!\!\sum_{n\geq 0}\!\! \mathbb{E}\left({\bf 1}_{\tau_{x}>T_{n}}{\bf 1}_{T_n\leq t}e^{-\lambda(t-T_n)}\mathbb{E}^{T_n}(e^{\lambda S_{n+1}}\Phi (X_{T_n}\!+\!\tilde{X}_{S_{n+1}}\!\! +\!Y_{n+1}\!-x\!)\Psi(x\!-\!X_{T_n}\!-\!\tilde{X}_{S_{n+1}})\!{\bf 1}_{\tilde{\tau}_{x-\!X_{T_n}}=S_{n+1}}\!{\bf 1}_{ t-T_{n}<S_{n+1}\leq t+h-T_{n}})\right).
$$
} 
We observe that on the set $\{\tilde{\tau}_{x-X_{T_n}}=S_{n+1}\}, Y_{n+1} > 0,~ K_x\geq 0,~  L_x \geq 0   \mbox{ and }
K_x + L_x = Y_{n+1}$. More over: 
\begin{align*}
\{\tilde{\tau}_{x-X_{T_n}}=S_{n+1}\}=\{\sup_{s\leq S_{n+1}}\tilde{X}_s <x-X_{T_n},\tilde{X}_{S_{n+1}}+Y_{n+1}>x-X_{T_n}\}.
\end{align*}
Integrating with respect to $(S_{n+1}, Y_{n+1})$ implies that
\begin{align*}
&\mathbb{E}^{T_n}(e^{\lambda S_{n+1}}\Phi (X_{T_n}\!+\!\tilde{X}_{S_{n+1}}\! +\!Y_{n+1}-x)\Psi(x-X_{T_n}\!-\!\tilde{X}_{S_{n+1}}){\bf 1}_\{\tilde{\tau}_{x-X_{T_n}}=S_{n+1} \}{\bf 1}_{ t-T_{n}<S_{n+1}\leq t+h-T_{n}})=\\
&\int_0^{+\infty} \int_{t-T_n}^{t+h-T_n}\lambda\mathbb{E}^{T_n}[\Phi(X_{T_n}\!+\!\tilde{X}_{u} +y-x)\Psi(x-X_{T_n}-\tilde{X}_{u}){\bf 1}_\{\sup_{s\leq u}\tilde{X}_s <x-X_{T_n},\tilde{X}_{u}+y>x-X_{T_n}\}] F_Y(dy)du.
\end{align*}
According to Corollary 3.2.1.2  page 147 of \cite{jeanblanc2009mathematical}, $(\sup_{s\leq u}\tilde{X}_s,\tilde{X}_u)$ admits a density
\begin{align*}
 \tilde{p}(b,a,t)=\frac{2(2b-a)}{\sqrt{2\pi t^3}} \exp\left[-\frac{(2b-a)^2}{2t}+ma-\frac{m^2}{2}t\right] {\bf 1}_{b>\max\{0,a\}}.
\end{align*}
So, $A_h^2$ is equal to 
\scriptsize{
\begin{align*}
e^{-\lambda h}\sum_{n\geq 0}\mathbb{E}\left[{\bf 1}_{\tau_{x}>T_{n}}{\bf 1}_{T_n\leq t}e^{-\lambda(t-T_n)} 
\!\!\int_{\mathbb{R}^2}\!\!\int_0^{+\infty}\!\!\! \int_{t-T_{n}}^{t+h-T_{n}}\!\!\!\!\lambda\Phi (X_{T_{n}}+a +y-x)\Psi(x\!-\!X_{T_{n}}\!-\!a){\bf 1}_\{b\!<\!x\!-\!X_{T_{n}},a+y\!>\!x\!-\!X_{T_{n}}\} \tilde{p}(b,a,u)\right]\\
 F_Y(dy)dudbda.
\end{align*}
}
Since $e^{-\lambda(t-T_n)}=\mathbb{E}^{T_n}({\bf 1}_{T_{n+1}>t}),$ and on the event $\{ N_t =n\},~~ T_{N_t+1}>t~~ a.s,$ we have
\small{
\begin{align*}
A_h^2=e^{-\lambda h}\mathbb{E}\left[{\bf 1}_{\tau_{x}>T_{N_t}}
\!\!\int_{\mathbb{R}^2}\!\!\int_0^{+\infty}\!\!\! \int_{t-T_{N_t}}^{t+h-T_{N_t}}\!\!\!\!\lambda\Phi (X_{T_{N_t}}+a +y-x)\Psi(x\!-\!X_{T_{N_t}}\!-\!a){\bf 1}_{\{b\!<\!x\!-\!X_{T_{N_t}},a+y\!>\!x\!-\!X_{T_{N_t}}\}} \tilde{p}(b,a,u)\right]\\
 F_Y(dy)dudbda.
\end{align*}
}
We compute the integral with respect to $db$ and it follows 
\footnotesize{
\begin{align*}
& A_h^2=e^{-\lambda h}\mathbb{E}\left[{\bf 1}_{\tau_{x}>T_{N_t}}
\!\!\int_{x-y-X_{T_{N_t}}}^{x-X_{T_{N_t}}}\!\!\int_0^{+\infty}\!\! \int_{t-T_{N_t}}^{t+h-T_{N_t}}\!\!\!\!\Phi (X_{T_{N_t}}+a +y-x)\Psi(x-X_{T_{N_t}}-a)
\frac{\lambda e^{-\frac{(a-mu)^2}{2u}}}{\sqrt{2\pi u}}F_Y(dy)du da\right]-\\
&e^{-\lambda h}  \mathbb{E}\left[{\bf 1}_{\tau_{x}>T_{N_t}}
\!\!\int_{x-y-X_{T_{N_t}}}^{x-X_{T_{N_t}}}\!\!\int_0^{+\infty}\!\! \int_{t-T_{N_t}}^{t+h-T_{N_t}}\!\!\!\!\Phi (X_{T_{N_t}}\!+a +y\!-\!x)\Psi(x\!-\!X_{T_{N_t}}\!-\!\!a)
\frac{\lambda e^{(-\frac{(a-mu-2x+\!2X_{T_{N_t}}\!)^2}{2u}\!+\!2m(x-\!X_{T_{N_t}}\!))}}{\sqrt{2\pi u}}F_Y(dy)du da\right].
\end{align*}   } 
Change of variables $v=u-(t-T_{N_t})$  and $l= x-X_{T_{N_t}}-a $  yields
\begin{align*}
& A_h^2=e^{-\lambda h}  
\mathbb{E}\left[\!\!\int_{0}^{y}\!\!\int_{0}^{+\infty}\!\! \int_{0}^{h}
{\bf 1}_{\{\tau_{x}>T_{N_t}\}}\Phi (y-l)\Psi(l)
\frac{\lambda e^{(-\frac{(x-l-X_{T_{N_t}}-m(t-T_{N_t}+v))^2}{2(t-T_{N_t}+v)})}} {\sqrt{2\pi (t\!-\!T_{N_t}+v)}}dvF_Y(dy) dl \right]\\
&-e^{-\lambda h}  
 \mathbb{E}\left[\!\!\int_{0}^{y}\!\!\int_{0}^{+\infty}\!\! \int_{0}^{h}
 {\bf 1}_{\{\tau_{x}>T_{N_t}\}} \Phi (y-l)\Psi(l)
\frac{\lambda e^{(-\frac{(x-X_{T_{N_t}}+l+m(t-T_{N_t}+v))^2}{2(t-T_{N_t}+v)}+2m(x-X_{T_{N_t}}))}}{\sqrt{2\pi (t-T_{N_t}+v)}}dvF_Y(dy) dl \right].\\
\end{align*}
Let $F_l$ be the image of $F_Y$ by the map $y\mapsto y-l$.
Hence,
\begin{align*}
\lim_{h \mapsto 0}& \frac{1}{h} A_h^2=\int_{0}^{+\infty}\!\!\int_{0}^{+\infty}\!\!\Phi (k)\Psi(l) \mathbb{E}\left[ {\bf 1}_{\{\tau_{x}>T_{N_t}\}}
\frac{\lambda e^{(-\frac{(x-l-X_{T_{N_t}}-m(t-T_{N_t}))^2}{2(t-T_{N_t})})}} {\sqrt{2\pi (t\!-\!T_{N_t})}}  \right]F_l(dk)dl\\
&- \int_{0}^{+\infty}\!\!\int_{0}^{+\infty}\!\!  \Phi (k)\Psi(l)
 \mathbb{E}\left[ {\bf 1}_{\{\tau_{x}>T_{N_t}\}}
\frac{\lambda e^{(-\frac{(x-X_{T_{N_t}}+l+m(t-T_{N_t}+))^2}{2(t-T_{N_t})}+2m(x-X_{T_{N_t}}))}}{\sqrt{2\pi (t-T_{N_t})}} \right]F_l(dk) dl\\
\end{align*}
since
\begin{align*}
v\mapsto 
 &\int_{0}^{+\infty}\!\!\int_{0}^{+\infty}\!\!\Phi (k)\Psi(l) \mathbb{E}\left[ {\bf 1}_{\{\tau_{x}>T_{N_t}\}}
\frac{\lambda e^{(-\frac{(x-l-X_{T_{N_t}}-m(t-T_{N_t}+v))^2}{2(t-T_{N_t}+v)})}} {\sqrt{2\pi (t\!-\!T_{N_t}+v)}}  \right]F_l(dk)dl\\
&-\int_{0}^{+\infty}\!\!\int_{0}^{+\infty}\!\!  \Phi (k)\Psi(l)
 \mathbb{E}\left[ {\bf 1}_{\{\tau_{x}>T_{N_t}\}}
\frac{\lambda e^{(-\frac{(x-X_{T_{N_t}}-l+l+m(t-T_{N_t}+v))^2}{2(t-T_{N_t}+v)}+2m(x-X_{T_{N_t}}))}}{\sqrt{2\pi (t-T_{N_t}+v)}} \right]F_l(dk) dl\\
\end{align*}
is continuous. 
\end{proof}
This proposition concludes the proof of Proposition \ref{casaut} .
\end{proof}
\section{Conclusion} 
Our study relies on the default time of a L\'evy process.
We have first shown that the distribution function of the default time $\tau_x$
 belongs to $\mathcal{C}(\R^*_+\times\R^*_+)$ 
 and for any $x\in\R^*_+,$ to  $\mathcal{C}(\R_+)$.
 {This will be very useful in our future works on default time of a L\'evy process where we will use the filtering theory.
 Secondly,} we have obtained an explicit expression to characterize the joint law of the hitting time, overshoot 
 and undershoot of one L\'evy process. In this expression, the Gaussian density is of great importance.
  This law gives a lot of information on the deficit and surplus at default time. 
In a following paper in progress, we will give a partial differential equation for a L\'evy process
 and its running  maximum.

\section{Appendix} 
\begin{lemme}\label{lem-A-1}
Let $\beta >-1,$ then for $0 <t \leq 1,$ 
\begin{align*}
{\mathbb E} \left( {\mathbf 1}_{\{ N_t \geq 2\}}(t-T_{N_t})^{\beta}\right) \leq \left( 
\sum_{n=1}^{\infty}\frac{\lambda^ne^t}{(n-1)!}  B( n, \beta+1)\right) t^{2+\beta }
\end{align*} 
\end{lemme}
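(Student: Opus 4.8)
The plan is to condition on the number of jumps before time $t$ and thereby reduce everything to the law of $T_n=\sum_{i=1}^n S_i$, which is the Gamma$(n,\lambda)$ law with density $s\mapsto\frac{\lambda^n s^{n-1}e^{-\lambda s}}{(n-1)!}{\mathbf 1}_{\{s>0\}}$. First I would note that on $\{N_t\geq 1\}$ one has $t-T_{N_t}>0$ almost surely, so every term below is nonnegative and Tonelli applies; writing $\{N_t=n\}=\{T_n\leq t<T_{n+1}\}=\{T_n\leq t\}\cap\{S_{n+1}>t-T_n\}$ and using that $S_{n+1}$ is independent of $(S_1,\dots,S_n)$, conditioning on $T_n$ gives
\begin{align*}
{\mathbb E}\left({\mathbf 1}_{\{N_t\geq 2\}}(t-T_{N_t})^{\beta}\right)
&=\sum_{n=2}^{\infty}{\mathbb E}\left({\mathbf 1}_{\{N_t=n\}}(t-T_n)^{\beta}\right)\\
&=\sum_{n=2}^{\infty}\int_0^t (t-s)^{\beta}e^{-\lambda(t-s)}\frac{\lambda^n s^{n-1}e^{-\lambda s}}{(n-1)!}\,ds
=\sum_{n=2}^{\infty}\frac{\lambda^n e^{-\lambda t}}{(n-1)!}\int_0^t (t-s)^{\beta}s^{n-1}\,ds.
\end{align*}

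Next I would evaluate the inner integral by the substitution $s=tu$: since $\beta>-1$ and $n\geq 1$ it converges and equals $t^{n+\beta}\int_0^1(1-u)^{\beta}u^{n-1}\,du=t^{n+\beta}B(n,\beta+1)$, so that
\begin{align*}
{\mathbb E}\left({\mathbf 1}_{\{N_t\geq 2\}}(t-T_{N_t})^{\beta}\right)=\sum_{n=2}^{\infty}\frac{\lambda^n e^{-\lambda t}}{(n-1)!}B(n,\beta+1)\,t^{n+\beta}.
\end{align*}
Finally I would observe that for $0<t\leq 1$ and $n\geq 2$ one has $t^{n+\beta}=t^{n-2}t^{2+\beta}\leq t^{2+\beta}$ and $e^{-\lambda t}\leq 1\leq e^{t}$, and since moreover $B(1,\beta+1)>0$, prepending the $n=1$ term only enlarges the right-hand side; this yields the claimed bound
\begin{align*}
{\mathbb E}\left({\mathbf 1}_{\{N_t\geq 2\}}(t-T_{N_t})^{\beta}\right)\leq\left(\sum_{n=1}^{\infty}\frac{\lambda^n e^{t}}{(n-1)!}B(n,\beta+1)\right)t^{2+\beta}.
\end{align*}

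I do not expect a genuine obstacle: the argument is a conditioning computation followed by a Beta-function identity and an elementary bound. The one point deserving care is the convergence of $\int_0^1(1-u)^{\beta}u^{n-1}\,du$ at its upper endpoint, which holds precisely because $\beta>-1$ — this is where the hypothesis is used; the convergence of the final series is then automatic, since $B(n,\beta+1)=\Gamma(n)\Gamma(\beta+1)/\Gamma(n+\beta+1)$ grows at most polynomially in $n$ and is crushed by the $1/(n-1)!$ factor.
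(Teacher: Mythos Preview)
Your proof is correct and follows essentially the same approach as the paper: decompose over $\{N_t=n\}$ for $n\geq 2$, integrate out $S_{n+1}$ to pick up the $e^{-\lambda(t-T_n)}$ factor, plug in the Gamma$(n,\lambda)$ density of $T_n$, and reduce the inner integral to $t^{n+\beta}B(n,\beta+1)$. The paper's own proof is terser and stops at the exact identity $\sum_{n\geq 2}\frac{\lambda^n e^{-\lambda t}}{(n-1)!}t^{n+\beta}B(n,\beta+1)$, leaving the final majorization implicit; you make that step (and the harmless inclusion of the $n=1$ term) explicit, which is a small improvement in clarity rather than a different method.
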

where  $B( n, \beta+1)=\int_0^1 (1-u)^\beta u^{n-1} du.$ 
\begin{proof}
According to the  values of the process $N_t$, we have
\begin{align*}
{\mathbb E} \left(  {\mathbf 1}_{\{ N_t \geq 2\}}(t-T_{N_t})^{\beta}\right) &= \sum_{ n\geq 2} \mathbb{E}\left((t-T_n)^{\beta}{\bf 1}_{T_n\leq t <T_n + S_{n+1}}\right)\\
& = \sum_{ n\geq 2} \mathbb{E}\left(e^{-\lambda(t-T_n)}(t-T_n)^{\beta}\right).
\end{align*}
Since $T_n$ follows a Gamma law of parameters $n$ and $\lambda$, hence
$$
{\mathbb E} \left(  {\mathbf 1}_{\{ N_t \geq 2\}}(t-T_{N_t})^{\beta}\right)=
 \sum_{n=2}^{\infty}\frac{\lambda^ne^{-\lambda t}}{(n-1)!} t^{n+ \beta}B( n, \beta+1).
$$
\end{proof}
\begin{lemme}\label{ui}
For any $0<t_1 <t_2$,
\begin{equation}\label{majorlemme} 
\sup_{t_1\leq t \leq t_2}\mathbb{E}\left(1_{\tau_{x}>T_{N_{t}}}\tilde{f}(t-T_{N_{t}}, x-X_{T_{N_{t}}})^p\right)< +\infty.
\end{equation}
and for instance, as a consequence, the family 
\begin{equation*}
\left(1_{\tau_{x}>T_{N_{t}}}\tilde{f}(t-T_{N_{t}}, x-X_{T_{N_{t}}}),~~t\in [t_1,t_2],~~ x>0\right)
\end{equation*}
is uniformly integrable.
\end{lemme}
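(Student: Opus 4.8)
The plan is to dominate the expectation in \eqref{majorlemme} by a convergent series obtained by conditioning on the number of jumps before time $t$ and integrating out the Brownian increment. First I would write $\{N_t=n\}=\{T_n\le t<T_{n+1}\}$ and use ${\bf 1}_{\{\tau_x>T_n\}}\le 1$, ${\bf 1}_{\{t<T_{n+1}\}}\le 1$ and $\tilde f\ge 0$ to reduce to
\[
\mathbb{E}\!\left({\bf 1}_{\{\tau_x>T_{N_t}\}}\tilde f(t-T_{N_t},x-X_{T_{N_t}})^p\right)\le\sum_{n\ge 0}\mathbb{E}\!\left({\bf 1}_{\{T_n\le t\}}\,\tilde f(t-T_n,x-X_{T_n})^p\right),
\]
so the indicator ${\bf 1}_{\{\tau_x>T_{N_t}\}}$ plays no role here (it can only improve constants). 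Since $(T_n)_n$ and $(Y_i)_i$ are independent of $W$, conditionally on $(T_n,Y_1,\dots,Y_n)$ the variable $x-X_{T_n}$ is Gaussian with mean $x-mT_n-\sum_{i\le n}Y_i$ and variance $T_n$; bounding its density by the maximal value $(2\pi T_n)^{-1/2}$ removes the dependence on the mean, hence on the $Y_i$, and gives for $n\ge 1$
\[
\mathbb{E}\!\left({\bf 1}_{\{T_n\le t\}}\tilde f(t-T_n,x-X_{T_n})^p\right)\le\frac{1}{\sqrt{2\pi}}\,\mathbb{E}\!\left({\bf 1}_{\{T_n\le t\}}\,T_n^{-1/2}\!\int_{\R}\tilde f(t-T_n,y)^p\,dy\right).
\]

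The inner integral is a routine Gaussian computation: from \eqref{densitycontinue}, $\int_{\R}\tilde f(u,y)^p\,dy=(2\pi)^{-p/2}u^{-3p/2}\int_{\R}|y|^p e^{-p(y-mu)^2/(2u)}\,dy$, and splitting $|y|^p\le 2^{p-1}(|y-mu|^p+|mu|^p)$ and using $u\le t_2$ yields $\int_{\R}\tilde f(u,y)^p\,dy\le C_p(m,t_2)\,u^{(1-2p)/2}$ for all $u\in(0,t_2]$. Hence the $n$-th term ($n\ge 1$) is at most $\frac{C_p(m,t_2)}{\sqrt{2\pi}}\,\mathbb{E}\big({\bf 1}_{\{T_n\le t\}}(t-T_n)^{(1-2p)/2}T_n^{-1/2}\big)$, and the $n=0$ term is at most $\tilde f(t,x)^p$.

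Summing over $n$ is then elementary: $T_n$ has the $\Gamma(n,\lambda)$ density and $\sum_{n\ge 1}\frac{\lambda^n s^{n-1}e^{-\lambda s}}{(n-1)!}=\lambda$, so by Tonelli
\[
\sum_{n\ge 1}\mathbb{E}\!\left({\bf 1}_{\{T_n\le t\}}(t-T_n)^{(1-2p)/2}T_n^{-1/2}\right)=\lambda\!\int_0^t (t-s)^{(1-2p)/2}s^{-1/2}\,ds=\lambda\,t^{1-p}B\!\left(\tfrac12,\tfrac32-p\right),
\]
which is finite exactly for $p<3/2$ and, as a function of $t$, bounded on $[t_1,t_2]$. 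Since also $\sup_{t\in[t_1,t_2],\,x>0}\tilde f(t,x)<\infty$ (the map $\tilde f$ is continuous and vanishes as $x\downarrow 0$ and as $x\to\infty$), this proves \eqref{majorlemme} for every $p\in[1,3/2)$, even with the supremum taken over $x>0$ as well.

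The one genuinely delicate point, and the main obstacle, is the blow-up of $y\mapsto\tilde f(u,y)$ as $u=t-T_{N_t}\downarrow 0$, i.e. when a jump lands just before time $t$: the resulting factor $u^{(1-2p)/2}$ is integrable against the jump-time density only for $p<3/2$, which is precisely the admissible range. This restriction is harmless for the stated corollary: writing $Z_{t,x}:={\bf 1}_{\{\tau_x>T_{N_t}\}}\tilde f(t-T_{N_t},x-X_{T_{N_t}})$ and fixing any $p\in(1,3/2)$, \eqref{majorlemme} gives $\mathbb{E}(Z_{t,x}{\bf 1}_{\{Z_{t,x}>K\}})\le K^{1-p}\sup_{t\in[t_1,t_2],x>0}\mathbb{E}(Z_{t,x}^p)\to 0$ as $K\to\infty$, uniformly in $(t,x)$, which is the asserted uniform integrability (de la Vall\'ee-Poussin with $\varphi(r)=r^p$).
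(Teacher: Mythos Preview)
Your argument is correct and reaches the same conclusion as the paper, but by a genuinely different and somewhat more elementary path. The paper does not bound the Gaussian density of $x-X_{T_n}$ by its maximum; instead it invokes an exact identity (Lemma~A.1 of \cite{coutin2011first}) computing $\mathbb{E}\big[\tilde f(u,\mu+\sigma G)^p\mathbf{1}_{\{\mu+\sigma G>0\}}\big]$ in closed form, then applies it with $\sigma^2=T_{N_t}$, $u=t-T_{N_t}$ so that $p\sigma^2+u=(p-1)T_{N_t}+t\ge t\ge t_1$. That identity exploits the Gaussian--Gaussian convolution structure and produces three terms containing powers of $(t-T_{N_t})^{-1}$ and $t^{-1}$; the resulting expectations $\mathbb{E}\big[T_{N_t}^{\alpha}(t-T_{N_t})^{-\beta}\big]$ are controlled term by term via Beta functions, with the binding constraint $\beta=(2p-1)/2<1$, i.e.\ $p<3/2$.

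Your route trades that exact identity for the crude bound $\phi_{\mu,T_n}(y)\le(2\pi T_n)^{-1/2}$, which throws away the mean (and hence the $Y_i$) at the cost of an extra $T_n^{-1/2}$; you then recover cleanliness by summing over $n$ via the generating-function identity $\sum_{n\ge1}\lambda^n s^{n-1}/(n-1)!=\lambda e^{\lambda s}$, collapsing everything to the single integral $\lambda\int_0^t s^{-1/2}(t-s)^{(1-2p)/2}\,ds=\lambda\,t^{1-p}B(\tfrac12,\tfrac32-p)$. Both approaches hit the identical threshold $p<3/2$, which is sharp for this method and, as you note, is all that is needed for the de~la~Vall\'ee-Poussin uniform-integrability consequence. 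Your version has the advantage of being self-contained (no external lemma) and of making the dependence on $x$ manifestly uniform; the paper's version is sharper at the level of constants since it keeps the full Gaussian decay rather than just the peak height.
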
  
\begin{proof}
Refer to Lemma 3.1 in \cite{coutin2011first}, if $ G $ is a Gaussian random variable $\mathcal{N}(0,1), \mu~>~0$, $u > 0, \sigma \in~\mathbb{R}^{+}
\mbox{ and } p\geq~1 ,$ then
{\small
\begin{equation*}
\mathbb{E}[(\tilde{f}(u, \mu+\sigma G))^p1_{\mu+\sigma G >0}]=\frac{1}{\sqrt{(2\pi)^{p}}} \frac{u^{\frac{1-2p}{2}} e^{-
\frac{p(\mu-mu)^{2}}{2(p\sigma^{2}+u)}×}}
{(p\sigma^{2}+u)^{\frac{p+1}{2×}}×}\mathbb{E}\left[\left(\sigma G+ \sqrt{\frac{u}{p\sigma^{2}+u×}}(\mu-mu)+
m\sqrt{u(p\sigma^{2}+u})\right)_{+}^{p}\right]
\end{equation*}
}

Using the inequality $ (a+ b + c)^{p} \leq 3^{p-1}(a^{p}+b^{p}+c^{p}) $ where a, b, c are positive numbers, it follows that 
{\small
\begin{equation*}
 \mathbb{E}[\tilde{f}(u, \mu+\sigma G)^{p}1_{\mu+\sigma G >0}]\leq \frac{3^{p-1}}{\sqrt{(2\pi)^{p}}} \frac{u^{\frac{1-2p}{2}}
e^{-\frac{p(\mu-mu)^{2}}{2(p\sigma^{2}+u)}×}}{(p\sigma^{2}+u)^{\frac{p+1}{2×}}×}\left( \sigma^{p}\mathbb{E}(\mid G \mid^{p})+
(\frac{u}{p\sigma^{2}+u})^{\frac{p}{2}} (\mu - mu)^{p}+ m^{p}(u(p\sigma^{2}+u))^{\frac{p}{2}}\right).
\end{equation*}
} 
Therefore
\begin{align*} 
    \mathbb{E}[\tilde{f}(u, \mu+\sigma G)^{p}1_{\mu+\sigma G >0}] &\leq  \frac{3^{p-1}}{\sqrt{2\pi}^{p}}
    \left(\frac{u^{\frac{1-2p}{2}}}{(p\sigma^{2}+u)^{\frac{p+1}{2}}}\sigma^p\mathbb{E}(| G |^p)+
    \frac{u^{\frac{1-p}{2}}}{(p\sigma^{2}+u)^{\frac{p+1}{2}}}c_{p} +
    \mid m \mid^{p}\frac{u^{\frac{1-p}{2}}}{(p\sigma^{2}+u)^{\frac{1}{2}}}\right)
    \\
    & \leq  \frac{3^{p-1}}{\sqrt{2\pi}^{p}} \left(\frac{u^{\frac{1-2p}{2}}}{(p\sigma^{2}+u)^{\frac{p+1}{2}}}\sigma^{p}
    \mathbb{E}(| G |^p)+
    \frac{c_{p}}{×u^{\frac{p-1}{2×}(p\sigma^{2}+u)^{\frac{p+1}{2×}}}}+ 
    \frac{|m|^p}{×u^{\frac{p-1}{2×}}(p\sigma^{2}+u)^{\frac{1}{2×}}}\right).
\end{align*}
Using the independence between the Brownian motion and the Poisson process, 
we can apply this  inequality to $\sigma=\sqrt{T_{N_{t}}}, u=t-T_{N_{t}},
 p\sigma^{2}+u=~(p-~1)T_{N_{t}}~+~t~\geq~t~>~t_1 $:
\begin{align*}
& \mid \mathbb{E}( 1_{\tau_{x}>T_{N_{t}}}\tilde{f}(t-T_{N_{t}}, x-X_{N_{t}})^{p}) \mid \leq 
 \\
& \frac{3^{p-1}}{\sqrt{2\pi}^p}\mathbb{E}\left(\frac{T_{N_{t}}^{\frac{p}{2}}\mathbb{E}(\mid G \mid^{p})}{(t-T_{N_{t}})^{\frac{2p-1}{2}} t^{\frac{p+1}{2}}}
 + \frac{c_{p}}{(t-T_{N_{t}})^{\frac{p-1}{2}}t^{\frac{2p+1}{2}}}
 + \frac{\mid m \mid^{p}}{\sqrt{t}(t-T_{N_{t}})^{\frac{p-1}{2}}} \right).
\end{align*}
We use the following  for $\alpha=0$ or $p/2,$ and $\beta=(2p-1)/2$ or $(p-1)/2$: 
\begin{align*} 
 \mathbb{E}(\frac{T_{N_{t}}^{\alpha}}{(t-T_{N_{t}})^{\beta}}) & = \frac{1}{t^\beta}+
 \sum_{n\geq 1}\mathbb{E}[\frac{T_{n}^{\alpha}}{(t-T_{n})^{\beta}}1_{T_{n}<t<T_{n+1}}]\\
& = \frac{1}{t^\beta}+ \sum_{n\geq 1}\int_{0}^{t}\frac{u^{\alpha}}{(t-u)^{\beta}}\frac{(\lambda u)^{n-1}}{(n-1)!×}\lambda e^{-\lambda u}
\int_{t-u}^{+\infty}\lambda e^{-\lambda v}dvdu\\
& = \frac{1}{t^\beta}+ e^{-\lambda (t)}\sum_{n\geq 1} \frac{\lambda^{n}}{(n-1)!×}
\int_{0}^{t}\frac{u^{n+\alpha-1}}{(t-u)^{\beta}}du\\
& = \frac{1}{t^\beta}+ e^{-\lambda (t)}\sum_{n\geq 1}\frac{[\lambda(t)]^{n}}{(n-1)!×}B(\alpha + n; 1-\beta)(t)^{\alpha-\beta}
\end{align*}
where $B( n, \beta+1)=\int_0^1 (1-u)^\beta u^{n-1} du.$
Since $t_2\geq t \geq t_1>0$ , we conclude.
\end{proof}

\begin{lemme}\label{lem-A-0} For any numbers $u,~v,~y,~z$ and $a$, the equality
\begin{align*}
\frac{1}{u} ( a-z)^2 +\frac{1}{v} (a-y)^2= \frac{v+u}{uv}\left[ a -\frac{vz+uy}{v+u} \right]^2 +\frac{1}{u+v}(z-y)^2
\end{align*}
holds.
\end{lemme}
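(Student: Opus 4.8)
The plan is to read the claim as an identity between two quadratic polynomials in the single variable $a$ (with $y,z,u,v$ as parameters) and to verify it by the standard completing-the-square manoeuvre. The natural substitution is $b := a - \frac{vz+uy}{v+u}$, which is exactly the quantity inside the square on the right-hand side. A one-line computation then gives $a-z = b + \frac{u(y-z)}{u+v}$ and $a-y = b + \frac{v(z-y)}{u+v}$, since the constant offsets are $\frac{vz+uy-(v+u)z}{v+u}$ and $\frac{vz+uy-(v+u)y}{v+u}$ respectively.

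Substituting these into $\frac1u(a-z)^2 + \frac1v(a-y)^2$ and expanding, the $b^2$ terms add up to $\left(\frac1u+\frac1v\right)b^2 = \frac{u+v}{uv}b^2$, which is precisely the quadratic term on the right. The terms linear in $b$ are $\frac{2b(y-z)}{u+v}$ and $\frac{2b(z-y)}{u+v}$, and these cancel; this cancellation is the whole reason for choosing that particular shift. What remains are the constants $\frac{u(y-z)^2}{(u+v)^2} + \frac{v(z-y)^2}{(u+v)^2} = \frac{(z-y)^2}{u+v}$, matching the last term on the right, and the verification is complete.

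I do not anticipate any genuine obstacle: this is a purely algebraic identity, valid whenever $u$, $v$ and $u+v$ are nonzero, requiring no analytic input — the only mild care needed is bookkeeping of the cross terms, which is exactly where the shift by $\frac{vz+uy}{v+u}$ pays off. If one prefers to avoid the substitution, both sides can instead be expanded directly to $\frac{u+v}{uv}a^2 - \frac{2(vz+uy)}{uv}a + \frac{z^2}{u} + \frac{y^2}{v}$, the only nontrivial point being the scalar identity $\frac{z^2}{u} + \frac{y^2}{v} = \frac{(vz+uy)^2}{uv(u+v)} + \frac{(z-y)^2}{u+v}$, which upon clearing the denominator $uv(u+v)$ reads $v(u+v)z^2 + u(u+v)y^2 = (vz+uy)^2 + uv(z-y)^2$, both sides equalling $uvz^2+v^2z^2+u^2y^2+uvy^2$. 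I would present the substitution version as the main proof, since it makes transparent \emph{why} the identity holds, and record the brute-force expansion as a one-line cross-check.
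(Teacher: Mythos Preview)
Your proof is correct and follows essentially the same approach as the paper: both complete the square in $a$. The paper expands first and then completes the square, while you introduce the shift $b = a - \frac{vz+uy}{v+u}$ at the outset; your ``brute-force'' alternative is in fact exactly the route the paper takes.
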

\begin{proof}
We develop both squared
\begin{align*}
\frac{1}{u} ( a-z)^2 +\frac{1}{v} (a-y)^2= (\frac{1}{u} + \frac{1}{v})a^2 - 2a(\frac{z}{u} +\frac{y}{v}) + \frac{z^2}{u} +\frac{y^2}{v}.
\end{align*}
We have the first square
\begin{align*}
\frac{1}{u} ( a-z)^2 +\frac{1}{v} (a-y)^2&= (\frac{1}{u} + \frac{1}{v})\left[ a^2 - 2a(\frac{z}{u} +\frac{y}{v})\frac{uv}{v+u} +\left[(\frac{z}{u} +\frac{y}{v})\frac{uv}{v+u} \right]^2 \right]\\
 &+ \frac{z^2}{u} +\frac{y^2}{v}- \left[ \frac{z^2}{u^2} + 2\frac{yz}{uv} +\frac{y^2}{v^2} \right] \frac{uv}{u+v}.
\end{align*}
We order
\begin{align*}
\frac{1}{u} ( a-z)^2 +\frac{1}{v} (a-y)^2&= (\frac{1}{u} + \frac{1}{v})\left[ a^2 - 2a(\frac{z}{u} +\frac{y}{v})\frac{uv}{v+u} +\left[(\frac{z}{u} +\frac{y}{v})\frac{uv}{v+u} \right]^2 \right]\\
 &+ z^2\frac{1}{u}[ 1- \frac{v}{u+v}] + y^2 \frac{1}{v}[ 1- \frac{u}{u+v}]  -  2\frac{yz}{uv} \frac{uv}{u+v}.
\end{align*}
which concludes the proof. 
\end{proof}
\begin{lemme}\label{lem-A-3} 
{Let be $(\tilde{X}_u, u\geq 0)$ be a Brownian motion with drift $m\in \mathbb{R}$. So, we have}
\begin{align*}
{\mathbb E} \left( {\mathbf 1}_{\{\tilde{X}^*_u < c\}} | \tilde{X_u} \right)= {\mathbf 1}_{\{ \tilde{X}_u <c\} }\left[ 1- \exp \left[- \frac{2 c^2}{u} + \frac{2c }{u} \tilde{X}_u \right] \right]
\end{align*}
for all real number $u> 0.$
\end{lemme}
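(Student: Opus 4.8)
The plan is to compute the conditional expectation by conditioning on the value $\tilde{X}_u=a$ and using the explicit joint density of $(\tilde{X}^*_u,\tilde{X}_u)$ (Corollary 3.2.1.2 of \cite{jeanblanc2009mathematical}), where $\tilde{X}^*_u=\sup_{s\le u}\tilde{X}_s$. Since $\mathbf{1}_{\{\tilde{X}^*_u<c\}}$ is measurable with respect to the path of $\tilde{X}$ up to time $u$, the quantity $\mathbb{E}(\mathbf{1}_{\{\tilde{X}^*_u<c\}}\mid\tilde{X}_u)$ is of the form $g(\tilde{X}_u)$ with $g(a)=\mathbb{P}(\tilde{X}^*_u<c\mid\tilde{X}_u=a)$, so it suffices to identify $g$; throughout I take $c>0$ (the case actually used, with $c=x$).

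First I would dispose of the case $a\ge c$: then $\tilde{X}^*_u\ge\tilde{X}_u=a\ge c$ forces $g(a)=0$, consistent with the indicator $\mathbf{1}_{\{\tilde{X}_u<c\}}$ appearing on the right-hand side. For $a<c$, since both $(\tilde{X}^*_u,\tilde{X}_u)$ and $\tilde{X}_u$ have densities, $g(a)$ is the ratio of $\int_{\max(0,a)}^{c}\tilde{p}(b,a,u)\,db$ to the Gaussian density $\frac{1}{\sqrt{2\pi u}}\exp(-\frac{(a-mu)^2}{2u})$ of $\tilde{X}_u$ at $a$, where $\tilde{p}(b,a,u)=\frac{2(2b-a)}{\sqrt{2\pi u^3}}\exp(-\frac{(2b-a)^2}{2u}+ma-\frac{m^2}{2}u)$ on $\{b>\max(0,a)\}$.

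The numerator is an elementary antiderivative: the substitution $w=2b-a$ gives $\int\frac{2(2b-a)}{\sqrt{2\pi u^3}}\exp(-\frac{(2b-a)^2}{2u})\,db=-\frac{1}{\sqrt{2\pi u}}\exp(-\frac{w^2}{2u})$, and as $b$ ranges over $(\max(0,a),c)$ the variable $w$ ranges over $(|a|,2c-a)$, so (using $|a|^2=a^2$) the numerator equals $e^{ma-m^2u/2}\,\frac{1}{\sqrt{2\pi u}}(\exp(-\frac{a^2}{2u})-\exp(-\frac{(2c-a)^2}{2u}))$. Dividing by the density of $\tilde{X}_u$ and expanding $-\frac{(a-mu)^2}{2u}=-\frac{a^2}{2u}+ma-\frac{m^2u}{2}$, the first term collapses to $1$ and the second becomes $\exp(\frac{a^2-(2c-a)^2}{2u})=\exp(-\frac{2c^2-2ac}{u})$, so $g(a)=1-\exp(-\frac{2c^2}{u}+\frac{2ca}{u})$. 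Replacing $a$ by $\tilde{X}_u$ and joining the two cases yields the claimed identity. The only point needing a little attention is the bookkeeping of the lower integration limit $\max(0,a)$ in the sub-cases $a<0$ and $0\le a<c$; in both the corresponding value of $w$ is $|a|$, so the computation is unchanged. Conceptually this drift-independence is expected, since conditioning on $\tilde{X}_u$ turns $(\tilde{X}_s)_{s\le u}$ into a Brownian bridge whose law does not depend on $m$.
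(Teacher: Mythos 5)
Your proposal is correct and follows essentially the same route as the paper: both start from the joint density of $(\tilde{X}^*_u,\tilde{X}_u)$ in Corollary 3.2.1.2 of \cite{jeanblanc2009mathematical}, pass to the conditional law given $\tilde{X}_u=a$, and integrate over $b\in(\max\{0,a\},c)$ via the substitution $w=2b-a$. You merely spell out the antiderivative, the lower limit $|a|$, and the cancellation of the drift terms, which the paper leaves implicit.
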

\begin{proof}
Refer to Corollary 3.2.1.2  page 147 of \cite{jeanblanc2009mathematical}, $(\tilde{X}_t^*,\tilde{X}_t)$ admits a density
\begin{align*}
 \tilde{p}(b,a,t)=\frac{2(2b-a)}{\sqrt{2\pi t^3}} \exp\left[-\frac{(2b-a)^2}{2t}+ma-\frac{m^2}{2}t\right] {\bf 1}_{b>\max\{0,a\}}
\end{align*}
So, the conditional law of $\tilde{X}_u^*$ given $\tilde{X}_u =a$  has the density
\begin{align*}
 f_{\tilde{X}^*|\tilde{X}=a}(b|a)= \frac{2(2b-a)}{u}\exp\left[-\frac{(2b-a)^2-a^2}{2u}\right]{\bf 1}_{b>\max\{0,a\}}.
\end{align*}
Thus
\begin{align*}
{\mathbb E} \left( {\mathbf 1}_{\{\tilde{X}^*_u < c\}} | \tilde{X_u}=a \right)&= {\{\bf 1}_{a<c\}}
\int_{\max\{a,0\}}^c \frac{2(2b-a)}{u}\exp\left[-\frac{(2b-a)^2-a^2}{2u}\right]{\bf 1}_{b>\max\{0,a\}} db \\
&= {\mathbf 1}_{\{a <c\} }\left[ 1- \exp \left[- \frac{2c^2}{u} + \frac{2c }{u} a \right] \right].
\end{align*}
\end{proof}
\noindent
{\bf Acknowledgments} We wish to thank Monique Pontier for her carefully reading this paper and making useful suggestions.

\end{document}